\renewcommand{\geq}{\geqslant}
\renewcommand{\leq}{\leqslant}
\renewcommand{\ge}{\geqslant}
\renewcommand{\le}{\leqslant}
\newcommand{\SC}{\textnormal{sc}}
\newcommand{\SR}{\textnormal{sr}}
\newcommand{\SE}{\textnormal{se}}
\newcommand{\I}{\mathcal{I}}
\newtheorem{theorem}{Theorem}
\newtheorem{conjecture}[theorem]{Conjecture}
\newtheorem{corollary}[theorem]{Corollary}
\newtheorem{claim}{Claim}
\newtheorem{observation}[claim]{Observation}
\newtheorem{notation}[claim]{Notation}
\newtheorem{definition}[theorem]{Definition}
\theoremstyle{remark}
\newtheorem{remark}[theorem]{Remark}
\def\eref#1{$(\ref{#1})$}
\def\sref#1{\S$\ref{#1}$}
\def\tref#1{Theorem~$\ref{#1}$}
\def\fref#1{Figure~$\ref{#1}$}
\def\cref#1{Conjecture~$\ref{#1}$}
\def\dref#1{Definition~$\ref{#1}$}
\def\clref#1{Claim~$\ref{#1}$}
\title{Outline Rectangles, Allocations, and Latin Young Diagrams}
\author{Jack Allsop, Dani Kotlar and Ian M. Wanless}
\date{}
\begin{document}

\maketitle

\begin{abstract}
A Young diagram is \emph{Latin} if there is an assignment of integers to its cells so that each row $i$ of length $l_i$ is populated by the numbers $1,\ldots,l_i$, and the numbers in each column are distinct. 
A Young diagram is called \emph{wide} if any subdiagram, formed by a subset of its rows, dominates its conjugate.
Chow et al. [Advances in Applied Mathematics, 31, 2003] conjectured that any wide Young diagram is Latin. 
We introduce a notion of an \emph{allocation} which can be thought of
as a coarse attempt at finding a Latin filling for a Young diagram.
Using a theorem of Hilton we prove that a Young diagram has an
allocation if and only if it is Latin.  This enables us to prove Chow et al.'s
conjecture for Young diagrams with three distinct row lengths.  %Finally, we discuss the possibility of generalising the Alon-Tarsi conjecture on the parity of Latin squares to Latin Young diagrams.
\end{abstract}

\section{Introduction}\label{sec:introduction}

Young diagrams are a way of representing partitions of natural numbers. Given a partition  $n=l_1+l_2+\cdots + l_m$ of a number $n$, where $0\le l_1 \le l_2 \le \cdots \le l_m$, the corresponding Young diagram~$Y$ is formed by $m$ rows of squares that are left-aligned, where the $i$-th row from the bottom consists of $l_i$ squares. 
Chow, Fan, Goemans, and Vondrak posed the following question \cite{chow2003}:

Suppose the squares of a Young diagram are filled, each with an element of the ground set of a given matroid $M$, so that the rows are independent sets in $M$. Can the elements in each row be rearranged so that the columns are independent? 

Their conjecture, named the Wide Partition Conjecture, is that the answer is positive if and only if the Young diagram is so-called \emph{wide}, where wideness is defined as follows:

\begin{definition}\label{def:wide}
    A Young diagram is wide if any subdiagram formed by a subset of its rows dominates its conjugate.
\end{definition}

The `only if' part of their conjecture can easily be shown to hold, so the conjecture is really about the `if' part.
Handling this problem for general matroids is hard, since even the `easy' case in which all rows of the given Young diagram have the same length is a notorious open problem known as Rota's basis conjecture \cite{rota94}. So, as in \cite{chow2003}, we focus on the simple case of a free matroid.
A Young diagram is \emph{Latin} if there is an assignment of integers to its cells so that each row $i$ of length $l_i$ is populated by the numbers $1,\ldots,l_i$, and the assignment is injective in each column. We call such an assignment a \emph{Latin filling}.
Chow et al. showed that in the case of a free matroid, the Wide Partition Conjecture can be reduced to the following conjecture:

\begin{conjecture}\label{conj:wpc}
    Every wide Young diagram is Latin. 
\end{conjecture}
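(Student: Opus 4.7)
The plan is to attack the conjecture through a combinatorial relaxation, the \emph{allocation}, which should be thought of as a coarse attempt at a Latin filling: instead of specifying, for each cell of $Y$, the integer it carries, an allocation only records, for each symbol $k$, the multiset of columns in which $k$ will appear (and, row-by-row, which instance of $k$ is assigned to which column), without yet committing to the geometric structure of how these assignments interlock. One imposes the natural counting compatibility conditions: each row $i$ contributes one instance of each of $1,\ldots,l_i$ to some column, and within any single column no symbol appears more than once.

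The first step of the plan is to prove that $Y$ admits an allocation if and only if $Y$ is Latin. The implication from Latin to allocation is immediate, since a Latin filling clearly records an allocation. For the converse I would invoke Hilton's theorem on completing symmetric partial Latin squares (or the analogous completion theorem for Latin rectangles) to turn the column-wise symbol counts coming from an allocation into an honest Latin filling of $Y$. The technical core of this step is to verify that the quantitative inequalities demanded by Hilton's theorem are precisely what an allocation supplies, so that the conversion is mechanical once the right formulation is chosen.

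The second step is to use this equivalence to prove Conjecture~\ref{conj:wpc} for Young diagrams with three distinct row lengths. Here I would directly construct an allocation from the wideness hypothesis: the domination inequalities, applied to the handful of subdiagrams available when only three distinct row lengths are present, should give enough slack to distribute the symbols $1,\ldots,l_i$ column-by-column while respecting column-distinctness. A case analysis on the multiplicities of the three row lengths, combined with a greedy or exchange argument driven by the dominance conditions, should close this case. Translating back via the equivalence from step one then yields a Latin filling.

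The main obstacle is of course the general conjecture itself, and even the three-row-length case is non-trivial because the column-distinctness requirement of an allocation couples together all the row-level choices; domination inequalities must be deployed carefully so that no column accidentally receives two copies of the same symbol. For an arbitrary number of distinct row lengths, allocations merely convert the problem into a still-difficult counting question involving domination across every subdiagram simultaneously, and I do not see a clean induction; this is where I expect any attack on the full conjecture to stall.
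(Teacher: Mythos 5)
The statement you are proving is a conjecture: the paper does not prove it, and neither does your proposal. What you have written is a plan that reproduces the paper's partial progress (an equivalence with a relaxed notion, plus the case of three distinct row lengths), so even if every step were carried out it would establish Theorem~\ref{thm:three}, not Conjecture~\ref{conj:wpc}; your own final paragraph concedes this. Beyond that, the relaxation you describe is not the one that makes the strategy work. Recording, for each symbol and each row, which column that instance occupies, subject to column-distinctness, is not a coarsening at all --- it is exactly a Latin filling, so your ``allocation'' is tautologically equivalent to being Latin and buys nothing. The paper's allocation is genuinely coarser: rows are amalgamated into row blocks, columns into column blocks, symbols into symbol blocks $S_k=\{a_{k-1}+1,\ldots,a_k\}$, and one records only the counts $z_{ijk}$ of symbols of block $k$ in the intersection of row block $i$ with column block $j$, subject to the linear constraints \eqref{e:sumk}--\eqref{e:sumi}. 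This coarsening is what turns the three-row-length case into a finite system of inequalities in five unknowns; with your definition there is no such reduction.

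Two further gaps. First, the Hilton result you invoke (completion of symmetric partial Latin squares, or Latin-rectangle completion) is the wrong theorem; what is needed is Hilton's outline-rectangle theorem (Theorem~\ref{thm:Hilton}): every outline rectangle is the reduction modulo $(P,Q,S)$ of a Latin square. The nontrivial content of Theorem~\ref{thm:alloc_implies_latin} is the embedding of an allocation into a $(p+1)\times(p+1)$ array of multisets --- padding with an extra row, column and symbol block of size $a_p$, taking $n=2a_p$ --- and the verification of conditions (i)--(iv) of Definition~\ref{d:OR}, where nonnegativity of the padding entries uses wideness; your sketch does not identify this construction, and ``the conversion is mechanical once the right formulation is chosen'' is precisely the part that needs proof. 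Second, for three distinct row lengths, ``a greedy or exchange argument driven by the dominance conditions'' is not an argument: the paper must choose the bottom-block value $x$ carefully as in \eqref{eq:value_of_x} (the paper's closing remark shows that an arbitrary allocation of the lower two blocks need not extend), and then run a lengthy interval-intersection and minimal-distance exchange analysis on $u,v,y,w$ with many cases, each resolved by a specific wideness inequality. None of that is present in your proposal, so the case you claim to close remains open in your write-up.
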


% Suppose a Young diagram has $p$ distinct row lengths. 
In \cite{chow2003}, Conjecture~\ref{conj:wpc} was proved for Young diagrams with two distinct row lengths and for Young diagrams with three distinct row lengths that are self-conjugate. It was also shown there that proving Conjecture~\ref{conj:wpc} for all self-conjugate Young diagrams will imply the general case. However, these results do not imply that Conjecture~\ref{conj:wpc} holds for all Young diagrams with three distinct row lengths.
One of the main results of this work is a proof of Conjecture~\ref{conj:wpc} for such Young diagrams:

\begin{theorem}\label{thm:three}
    A wide Young diagram with three distinct row lengths is Latin.
\end{theorem}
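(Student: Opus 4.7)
The plan is to use the paper's announced equivalence between Latinness and the existence of an allocation, which reduces Theorem~\ref{thm:three} to showing that every wide Young diagram with exactly three distinct row lengths admits an allocation. The advantage is that an allocation is a lower-resolution combinatorial object than a Latin filling, so its existence can hopefully be certified directly from the wideness inequalities rather than through a full column-by-column construction.

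To set things up, let the three row lengths be $a<b<c$ with multiplicities $\alpha,\beta,\gamma$. The wideness condition quantifies over all subsets of rows, but because rows within a length class are interchangeable, the essential subdiagrams are parametrised by triples $(\alpha',\beta',\gamma')$ with $0\le\alpha'\le\alpha$, $0\le\beta'\le\beta$, $0\le\gamma'\le\gamma$, and in fact one can restrict further to those formed by taking \emph{all} of one or more classes. This produces an explicit bounded system of majorisation inequalities in $a,b,c,\alpha,\beta,\gamma$, which I would write out and reduce to a minimal sufficient set of ``binding'' constraints.

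With the inequality system in hand, I would construct an allocation directly. A natural recipe is to peel off the $m\times a$ outline rectangle containing the full shortest-row portion of every row, verify that wideness is inherited by the remaining two-row-length diagram, and then apply Chow et al.'s two-row-length theorem (or, equivalently, produce an allocation for it by a short direct argument). The reduction must be performed in an allocation-aware way: one needs the peeled rectangle to admit a Latin filling that is compatible with an allocation of the remainder, and this is exactly the kind of matching-style condition that the wideness inequalities are tailor-made to guarantee. An allocation of the medium rows then uses columns of heights in $\{\beta+\gamma,\gamma\}$, and an allocation of the long rows uses only the top $\gamma$-block, so each stage reduces cleanly to the next.

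The main obstacle I expect is the handling of boundary configurations, in which several wideness inequalities are tight at once. When they are, the allocation of the shortest rows is essentially forced, leaving no slack to absorb suboptimal choices later. My plan is to isolate this small family of extremal instances and write down explicit allocations for them, then verify that in all remaining cases the greedy peel-and-reduce argument above has enough slack to proceed. If this separation can be made uniform, the proof collapses to a finite case analysis indexed by which wideness inequalities are tight, together with one appeal to the two-row-length result and one appeal to the allocation-equals-Latin theorem.
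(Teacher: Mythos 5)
Your overall reduction is the same as the paper's: by Theorem~\ref{thm:alloc_implies_latin} it suffices to show that a wide diagram with three distinct row lengths has an allocation (Theorem~\ref{thm:wide_implies_alloc}). The gap is in how you propose to produce that allocation. Your peel-and-reduce step does not work as stated. First, wideness is \emph{not} inherited by the diagram obtained by deleting the leftmost $a_1$ columns: for example, $Y=(5,4,4,3)$ (so $a_1=3$, $a_2=4$, $a_3=5$, $e_1=1$, $e_2=2$, $e_3=1$) satisfies all the wideness tests of Theorem~\ref{thm:wideness_check}, yet removing the first three columns leaves the shape $(2,1,1)$, whose top two rows (total $3$) fail to dominate its leftmost two columns (total $4$). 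Second, even when the remainder is wide, you cannot invoke Chow et al.'s two-row-length theorem for it: in a Latin filling of $Y$ the cells of a length-$b$ row to the right of column $a_1$ carry an arbitrary $(b-a_1)$-subset of $\{1,\ldots,b\}$, not the initial segment $\{1,\ldots,b-a_1\}$, so the remainder is not a Latin Young diagram problem in the sense of that theorem but a list-type completion problem. The coordination between the left $m\times a_1$ block and the rest (your ``matching-style condition'') is precisely where all the difficulty lives, and your proposal leaves it entirely unargued.

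This is not a presentational quibble: the paper's own remark after Theorem~\ref{thm:wide_implies_alloc} exhibits a wide diagram $(5,4,3,3)$ for which a perfectly valid allocation of the lower blocks cannot be extended upward, which shows that any staged construction must make a globally coordinated choice at each stage (the paper fixes the specific value of $x$ in \eqref{eq:value_of_x} and then needs a lengthy interval-intersection and case analysis, Claims~\ref{claim:interval_nonempty_singles}--\ref{claim:interval_nonempty_all4} and Cases (i)--(ii), to extend to the top block). Your plan to ``isolate the extremal instances where inequalities are tight and handle the rest greedily'' is a reasonable heuristic but is not a proof: the binding configurations are not a small finite family (they depend on the continuous-like parameters $a_i,e_i$), and the slack analysis you defer is exactly the content of the paper's proof. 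To salvage your route you would need either a genuinely stronger two-row-length statement allowing prescribed (non-interval) symbol lists per row, or an explicit compatibility argument replacing the paper's case analysis; as written, the proposal does not establish Theorem~\ref{thm:three}.
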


To prove \tref{thm:three}, we introduce the notion of an \emph{allocation}, which is a coarse filling of a Young diagram adhering to certain constraints. Instead of specifying which symbol goes in each cell, we subdivide the Young diagram into sub-rectangles and partition the symbol set into smaller subsets. An allocation specifies how many elements of each symbol subset should be in each sub-rectangle. We defer the formal definition of an allocation to \sref{sec:outline}. Our approach is inspired by the notion of an \emph{outline rectangle}, introduced by Hilton \cite{Hilton} for rectangular arrays. We rely on Hilton's result, which states that any outline rectangle can be traced back to a Latin square, to demonstrate that if a Young diagram has an allocation, then it has a Latin filling. Thus, another of our main results is that we show that to prove Conjecture~\ref{conj:wpc} it suffices to prove the following equivalent conjecture:

\begin{conjecture}\label{conj:wide_implies_outline}
    Any wide Young diagram has an allocation.
\end{conjecture}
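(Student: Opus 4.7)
The plan is to construct an allocation directly from the combinatorial data of the Young diagram. Let $Y$ be a wide Young diagram with distinct row lengths $l_1 < l_2 < \cdots < l_k$, where there are $r_i$ rows of length $l_i$. The cells of $Y$ partition naturally into sub-rectangles $R_{i,j}$ for $j \le i \le k$, where $R_{i,j}$ consists of the rows of length $l_i$ intersected with the columns indexed $l_{j-1}+1, \ldots, l_j$ (with $l_0 = 0$). The symbol set $\{1, \ldots, l_k\}$ splits correspondingly into blocks $S_s = \{l_{s-1}+1, \ldots, l_s\}$. An allocation then amounts to choosing non-negative integers $x_{i,j,s}$ recording how many cells of $R_{i,j}$ receive a symbol from $S_s$, subject to the obvious marginals on rows, columns, and symbol blocks.

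First, I would write out the feasibility conditions for the $x_{i,j,s}$ as a transportation-style linear system. Each symbol block $S_s$ must appear in full in every row of length at least $l_s$, fixing certain row-symbol totals; each sub-rectangle $R_{i,j}$ has a fixed cell count; and the column-distinctness of a Latin filling translates into per-column upper bounds within each $R_{i,j}$. Because the coefficient matrix has transportation-type structure, the underlying polytope should be integral, so constructing an allocation reduces to producing any feasible point.

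The crucial step is to derive feasibility from wideness. Wideness is a family of domination inequalities on subdiagrams, and the plan is to translate these into Hall-type cut conditions on the associated bipartite supply-demand network, whose two sides represent symbol blocks and sub-rectangles. A natural strategy is induction on $k$: for $k = 1$ the statement is trivial, for $k = 2$ it follows from the result of Chow et al., and for $k = 3$ a careful case analysis should yield \tref{thm:three}. The main obstacle in the general case is that the obvious peeling operations — removing the tallest row block or the rightmost column block — need not preserve wideness, so the induction as stated breaks down. Overcoming this would require either a more subtle reduction (for instance, peeling a carefully chosen intermediate row block) or a direct dual argument translating an LP infeasibility certificate back into a violation of wideness on some explicit subdiagram of $Y$. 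I expect this translation to be the hard part, since the wideness hypothesis must supply information about the simultaneous interactions of row lengths and column positions, whereas its statement is ostensibly about row lengths alone.
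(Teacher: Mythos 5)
There is a fundamental mismatch here: Conjecture~\ref{conj:wide_implies_outline} is an \emph{open conjecture}; the paper does not prove it in general, but only for Young diagrams with at most three distinct row lengths (Theorem~\ref{thm:wide_implies_alloc}), via a long explicit analysis of the free variables $x=z_{211}$, $u=z_{311}$, $v=z_{312}$, $y=z_{321}$, $w=z_{322}$ against the wideness inequalities \eqref{wideness:1}--\eqref{wideness:6}. What you have written is a programme, not a proof, and you concede the decisive point yourself: you have no argument that wideness implies feasibility of the linear system for general $p$, and your proposed induction, as you note, breaks down. That is precisely where the problem is open. Section~\ref{sec:general} records that the authors can show each individual lower bound is at most each corresponding upper bound, yet cannot show the whole system of constraints is simultaneously satisfiable; even for $p=3$ this consistency question is the entire content of the proof of Theorem~\ref{thm:wide_implies_alloc}, so your remark that ``a careful case analysis should yield'' the three-block case cannot be waved through --- it is the hard work, and it does not obviously generalise.

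Two further concrete gaps. First, the integrality claim is unsupported: the allocation variables $z_{ijk}$ form a three-index transportation-type system --- the equalities \eqref{e:sumk} and \eqref{e:sumj} within each row block $i$, coupled across $i$ by the inequalities \eqref{e:sumi} --- and multi-index transportation polytopes are not integral in general, so ``the underlying polytope should be integral'' is an unproven assertion; the paper never argues via polytope integrality, it constructs integer solutions directly. Second, translating column-distinctness into ``per-column upper bounds within each $R_{i,j}$'' is not the allocation condition; the correct constraint is the block-level bound $\sum_i z_{ijk}\le b_jb_k$ of \eqref{e:sumi}, and passing from such coarse block data back to an actual Latin filling is not automatic --- it requires Hilton's outline-rectangle theorem, as in Theorem~\ref{thm:alloc_implies_latin} (though for the bare existence of an allocation this direction is not needed). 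In short, your framework matches the paper's notion of allocation, but the proposal contains no proof of the statement, and indeed no proof of the general statement exists in the paper either.
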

We prove Conjecture~\ref{conj:wide_implies_outline} for Young diagrams with three distinct row lengths. 
The method of proof suggests that it might be possible to prove Conjecture~\ref{conj:wide_implies_outline} in the general case by induction on the number of distinct row-lengths. 

Two recent works applied other approaches to the problem of Latin fillings of Young diagrams. Chow and Tiefenbruck~\cite{chow_tief} made some progress on the so-called Latin Tableau conjecture of Chow et al.~\cite{chow2003}, which is a generalisation of Conjecture~\ref{conj:wpc}.  
Aharoni et al.~\cite{abgk} took a hypergraph approach and showed that Conjecture~\ref{conj:wpc} would follow if equality holds between a matching parameter of a 3-partite 3-hypergraph related to a given Young diagram, and its covering dual. 

This paper is arranged as follows: In Section~\ref{sec:wideness}, we show that the complexity of determining wideness of a Young diagram is at most quadratic in the number of distinct row lengths.
In Section~\ref{sec:outline}, we introduce allocations and use a theorem of Hilton to show that Conjecture~\ref{conj:wpc} is equivalent to Conjecture~\ref{conj:wide_implies_outline}.
In Section~\ref{sec:three}, we prove that any wide Young diagram with three distinct row lengths has an allocation.
In Section~\ref{sec:general}, we discuss two approaches to proving Conjecture~\ref{conj:wide_implies_outline} in general. 

%--------------------------------------------------------------------------------------------------------------------------
\section{Testing wideness}\label{sec:wideness}

Throughout this section, let $Y$ be a Young diagram with $p$ distinct row lengths $a_1<a_2<\cdots<a_p$ and for $i=1,\ldots,p$, let the number of rows of $Y$ of length $a_i$ be $e_i$. Also set $a_0=0$. 
%Assuming this, we can define \emph{blocks}:

\begin{definition}
    A row block in $Y$ is a maximal set of rows of the same length.
    A column block is a maximal set of columns of the same length.
    The symbol blocks are  $S_1, S_2, \ldots , S_p$, where $S_i$ is the set of symbols $\{a_{i-1}+1, a_{i-1}+2, \ldots, a_i\}$.
    When row/column/symbol is not specified, we shall assume that `block' refers to a row block.
\end{definition}

The row blocks of $Y$ are numbered from bottom to top (from short to long), while the column blocks are numbered from left to right (from long to short). As defined, the symbol blocks are numbered according to the ascending order of symbols. 

Considering Definition~\ref{def:wide}, which subdiagrams do we need to check to determine whether $Y$ is wide? Chow et al.~\cite{chow2003} showed that it suffices to consider `tail' subdiagrams (i.e.,~subdiagrams consisting of all rows from some row $r$ downwards), and check whether the top $w$ rows of each such subdiagram dominates the leftmost $w$ columns. However, it seems we do not need to consider all values of $r$ and $w$. 

\begin{notation}
    Denote by $Y[r]$ the tail subdiagram of $Y$ consisting of all the rows from $r$ downwards.
\end{notation}

Throughout this paper, when $a$ and $b$ are integers with $a \leq b$ we will use the interval notation $[a, b] = \{a, a+1, \ldots, b\}$. We will use $[a]$ as shorthand for $[1, a]$.

\begin{theorem}\label{thm:widecheck:1}
To check whether $Y$ is wide it suffices to check whether the top $w$ rows of the subdiagram $Y[r]$ dominates its leftmost $w$ columns, whenever $r$ is the top row of a block $k$ and $w\in\{a_1,\ldots, a_{k}\}$.
% To check the wideness of a diagram with $p$ blocks it suffices to test the cases where $r$ is the first row of block $i$, for $1\le i \le p$ and where $w=1, a_1,\ldots,a_{i-1}$ and $w=a_j$ for $1\le j< i$.
\end{theorem}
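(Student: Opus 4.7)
The starting point is the paper's own restatement of Chow et al.'s reduction: $Y$ is wide if and only if, for every row index $r$ and every integer $w\ge 1$, the top $w$ rows of $Y[r]$ contain at least as many cells as the leftmost $w$ columns of $Y[r]$; equivalently, $d(r,w):=|T_w(Y[r])|-|C_w(Y[r])|\ge 0$, where $T_w$ and $C_w$ denote the top $w$ rows and leftmost $w$ columns of the indicated subdiagram. My plan is to refine this check in two independent steps: first by restricting $r$ to the top row of each block, and then, for such $r$, by restricting $w$ to $\{a_1,\dots,a_k\}$.

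For the reduction on $r$, suppose $r$ lies in block $k$ but is not its top row; let $r^{\star}$ denote the top of this block and $s:=r^{\star}-r\ge 1$. Because the rows $r+1,\dots,r^{\star}$ all have length $a_k$, a direct computation yields the clean identity
\[
d(r^{\star},w+s)=d(r,w)+s\max(a_k-w-s,0)-\sum_{j=w+1}^{w+s}c_j(r),
\]
where $c_j(r)$ is the height of column $j$ in $Y[r]$. When $w+s\ge a_k$ the maximum vanishes, the remaining correction is manifestly non-negative, and we read off $d(r,w)\ge d(r^{\star},w+s)\ge 0$. When $w+s<a_k$ the identity leaves a potentially positive deficit $s(a_k-w-s)-\sum_{j=w+1}^{w+s}c_j(r)$, which must be absorbed by the analogous identity for the shift $(r,w)\mapsto(r^{\star},w)$, using the non-negativity of $d$ at additional reference points. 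The outcome is that $d(r^{\star},w')\ge 0$ for every $w'$ implies $d(r,w)\ge 0$ for every $w$, so checking only at the tops of blocks is enough.

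For the reduction on $w$, fix $r$ to be the top of block $k$ and examine the first difference
\[
\delta(w):=d(r,w+1)-d(r,w)=l_{r-w}-c_{w+1}(r).
\]
The structural observation is that $c_{w+1}(r)=b_k-b_{i-1}$ (with $b_j:=e_1+\cdots+e_j$) is constant as $w$ ranges over the interval $[a_{i-1},a_i-1]$, while $l_{r-w}$ is non-increasing in $w$. Hence $\delta$ is non-increasing on each such interval, i.e., $d(r,\cdot)$ is discretely concave there, and its minimum on $[a_{i-1},a_i-1]$ is attained at one of the endpoints. A short case split on the sign of $\delta(a_i-1)$, combined with the identity $d(r,a_i)=d(r,a_i-1)+\delta(a_i-1)$, yields $d(r,a_i-1)\ge\min\bigl(d(r,a_{i-1}),\,d(r,a_i)\bigr)$. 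Since $d(r,0)=0$ is automatic and $d(r,\cdot)$ is non-decreasing on $[a_k,\infty)$ (because $c_{w+1}(r)=0$ there), this establishes that $d(r,w)\ge 0$ at the $k$ values $w\in\{a_1,\dots,a_k\}$ forces $d(r,w)\ge 0$ for all $w\ge 1$.

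The clean combinatorial heart of the argument is the second reduction: discrete concavity on each $[a_{i-1},a_i-1]$ pins all potential minima of $d(r,\cdot)$ to the distinct row lengths. The main obstacle I expect is the first reduction, specifically the sub-case $w+s<a_k$, where the single shift identity alone does not suffice and one must combine two shifts (or derive a sharper lower bound on $\sum_{j=w+1}^{w+s}c_j(r)$ from the wideness data at lower-indexed blocks, using in particular the consequence $b_k\le a_k$) in order to discharge the deficit. Once both reductions are established, Theorem~\ref{thm:widecheck:1} follows by combining them.
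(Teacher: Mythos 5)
Your second reduction (on $w$) is sound and is essentially the paper's own argument: on each interval of $w$ where the incoming column height is constant, the increment $l_{r-w}-c_{w+1}(r)$ is non-increasing, so $d(r,\cdot)$ is discretely concave there and its minimum sits at an endpoint; your use of $d(r,0)=0$ even replaces, a little more cleanly, the paper's separate argument that the $w=1$ test is equivalent to the $w=a_k$ test. The genuine problem is the first reduction (on $r$). The outcome you assert there --- that $d(r^{\star},w')\ge 0$ for every $w'$, with $r^{\star}$ the top of the \emph{same} block as $r$, already forces $d(r,w)\ge 0$ for every $w$ --- is false. Take $a_1=2$, $e_1=3$, $a_2=5$, $e_2=2$, and let $r^{\star}$ be the top of block $2$, so $Y[r^{\star}]$ has rows $5,5,2,2,2$; then $d(r^{\star},w)=0$ for all $w\in[1,5]$ (and $d\ge0$ beyond), yet for $r$ the bottom row of block $2$ (so $s=1$) the tail $Y[r]$ has rows $5,2,2,2$ and $d(r,2)=7-8<0$. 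Hence no combination of your two shift identities, both anchored only at $r^{\star}$, can close the sub-case $w+s<a_k$: the deficit $s(a_k-w-s)-\sum_{j=w+1}^{w+s}c_j(r)$ is genuinely positive in such examples, and the failure is only detectable at the top of block $k-1$, a reference point your step never actually invokes (you gesture at ``wideness data at lower-indexed blocks'' but give no argument, and you yourself flag this sub-case as unresolved).

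The correct version of the $r$-reduction, and the paper's route, is concavity in $r$ rather than a direct comparison of two tails: fix $w\le a_k$ and move $r$ down one row at a time from the top of block $k$ to the top of block $k-1$; each step changes $d$ by $\ell-a_k+w$, where $\ell$ (the length of the row entering the top-$w$ window) is non-increasing, so the increments are non-increasing and the minimum of $d(\cdot,w)$ over these positions is attained at one of the two block tops. This gives $d(r,w)\ge\min\bigl(d(r^{\star}_{k},w),\,d(r^{\star}_{k-1},w)\bigr)$, which, combined with your (correct) $w$-reduction applied at every block top, yields the theorem. In the example above this is exactly what happens: the failure at $(r,2)$ is certified at the top of block $1$ (rows $2,2,2$), where $d(\cdot,a_1)=4-6<0$, not at the top of block $2$. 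As written, then, your proof has a real gap in the first reduction; the second reduction can be kept essentially verbatim.
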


\begin{proof}
First fix $r$ in some block $k$ and consider varying $w$ within some interval $[a_i,a_{i+1}]$, where $1\le i<k$ (there are only $a_k$ columns in $Y[k]$ so we do not need to consider $w>a_k$),
or within the interval $[1,a_1]$. Let $D$ be the sum of the lengths of the top $w$ rows of $Y[r]$ minus the sum of the lengths of the leftmost $w$ columns of $Y[r]$. Wideness dictates that $D$ must be non-negative. As we increase $w$ by $1$, we add to $D$ the length of one new row and subtract the length of one new column. If this reduces $D$ then each subsequent increase in $w$ will also decrease $D$ because the new rows get shorter but the new columns have the same length (since we are within the interval $[a_i,a_{i+1}]$). So $D$ will be minimized by taking $w=a_{i+1}$, which is the only value of $w$ that we need to test.

On the other hand, if $D$ increases when we add 1 to $w$, then $D$ will decrease each time we reduce $w$ (the rows we are discarding are getting bigger, and the columns stay the same), so $D$ will be minimized by taking $w=a_i$ or, in the case where $w \in [a_1]$, $D$ will be minimized by taking $w=1$.
Either way, we only need to test $w$ at one end of the interval.

Next, we consider fixing $w$ and moving $r$ within the interval of $e_k$ rows of the same length as $r$.
The argument is similar.
As we move $r$ down the diagram, we add a row of some length $\ell$, subtract a row of length $a_k$ and subtract $w$ from the column total (meaning $D$ increases by $\ell-a_k+w$). 
If this reduces $D$, then each subsequent time we move $r$ downwards will also decrease $D$, because $\ell$ decreases and $w-a_k$ is constant. So we minimize $D$ by moving $r$ to the top row of the $(k-1)$-st block. 
On the other hand, if $\ell-a_k+w\ge0$, we should move $r$ to the top of the $k$-th block. 

We claim that for a given $r$, the top row of row block $k$, we do not need to check whether the top $w$ rows of $Y[r]$ dominates its leftmost $w$ columns for both $w=1$ and $w=a_k$, as the domination conditions with $w=1$ and $w=a_k$ are equivalent.
First, note that for $w=1$, the condition is $\sum_{i=1}^ke_i\le a_k$, which means that in $Y[r]$, there are at most $a_k$ rows. 
Also note that there are $a_k$ columns in $Y[r]$, so the sum of the lengths of the leftmost $a_k$ columns of $Y[r]$ is $|Y[r]|$.
Suppose $Y[r]$ passes the domination test with $w=1$. 
That is, $\sum_{i=1}^ke_i\le a_k$. Then, $Y[r]$ has at most $a_k$ rows. So, the sum of the lengths of the top $a_k$ rows and the sum of the lengths of the leftmost $a_k$ columns of $Y[r]$ are both equal to $|Y[r]|$. 
Thus, $Y[r]$ passes the domination test with $w=a_k$. 
Now, assume $Y[r]$ does not satisfy the domination condition with $w=1$. 
That is, $\sum_{i=1}^ke_i> a_k$. 
This means that there are more than $a_k$ rows in $Y[r]$. 
So, the sum of the lengths of the top $a_k$ rows in $Y[r]$ is less than $|Y[r]|$. 
Thus, $Y[r]$ does not satisfy the domination condition with $w=a_k$.
\end{proof}

The next observation follows from the last part of the proof of Theorem~\ref{thm:widecheck:1}.
\begin{observation}\label{obs:condition_for_ak}
    If $r$ is the top row of a block $k$, the wideness condition for $Y[r]$ with $w=a_k$ is equivalent to $\sum_{i=1}^ke_i\le a_k$.
\end{observation}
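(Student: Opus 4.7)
The plan is to isolate the double-counting argument that already appears in the final paragraph of the proof of Theorem~\ref{thm:widecheck:1}. The crucial observation is that since $r$ is the top row of block $k$, the longest row appearing in the subdiagram $Y[r]$ has length $a_k$, so $Y[r]$ has exactly $a_k$ columns. Consequently, the sum of the lengths of the leftmost $a_k$ columns of $Y[r]$ is simply $|Y[r]|$, the total number of cells in $Y[r]$.

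Next I would compare this quantity to the sum of the lengths of the top $a_k$ rows of $Y[r]$. The total number of rows of $Y[r]$ is $\sum_{i=1}^k e_i$, so two cases arise. If $\sum_{i=1}^k e_i \le a_k$, then every row of $Y[r]$ is included among the top $a_k$ rows, so the row-length sum also equals $|Y[r]|$, and the wideness inequality (row sum $\ge$ column sum) is satisfied with equality. If instead $\sum_{i=1}^k e_i > a_k$, then the top $a_k$ rows omit at least one nonempty row of $Y[r]$, so the row-length sum is strictly less than $|Y[r]|$, and the wideness condition fails. Combining the two directions yields the stated equivalence.

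I do not anticipate any obstacle: the argument reduces to a trivial cell-count, because for $w$ equal to the number of columns of $Y[r]$, the column-sum side of the domination test automatically equals $|Y[r]|$, so the test collapses to the inequality $(\text{number of rows of } Y[r]) \le a_k$. The observation is therefore a direct corollary of the computation already carried out in Theorem~\ref{thm:widecheck:1}, simply extracted for later reference.
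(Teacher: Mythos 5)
Your proof is correct and is essentially the argument the paper itself uses (the final paragraph of the proof of Theorem~\ref{thm:widecheck:1}, from which the observation is extracted): since $Y[r]$ has exactly $a_k$ columns, the column side of the domination test equals $|Y[r]|$, and the test with $w=a_k$ reduces to whether the number of rows $\sum_{i=1}^k e_i$ is at most $a_k$. No gaps; the only implicit point, that every omitted row is nonempty, holds because all row lengths are positive.
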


For a given row $r$, the top row in block $k$, and $w=a_j$ where $1\le j\le k-1$, what is the condition to be verified if we want to establish wideness? 

\begin{notation}
    For $k\le p$, denote by $Y_k$ the subdiagram $Y[r]$ where $r$ is the top row in block $k$. 
    For $j\le k$ we denote by $\SC_{j,k}$ (resp.\
    $\SR_{j,k}$) the sum of the lengths of the leftmost $a_j$ columns (resp.\ top $a_j$ rows) in $Y_k$. 
\end{notation}

\begin{claim}
    We have: 
    \begin{equation}\label{eq:sc:1}
        \SC_{j,k}=\sum_{t=1}^{j-1}a_te_t+a_j\sum_{t=j}^ke_t
    \end{equation}
    and also
\begin{equation}\label{eq:sc:2}
        \SC_{j,k}=\sum_{u=1}^j\left((a_u-a_{u-1})\sum_{v=u}^{\smash{k}}e_v\right).
    \end{equation}   
\end{claim}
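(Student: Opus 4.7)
The plan is to verify each formula by counting the cells in the leftmost $a_j$ columns of $Y_k$ in two different ways---once by rows and once by columns. Recall $Y_k$ consists of $e_t$ rows of length $a_t$ for $t=1,\ldots,k$, and since $a_1<a_2<\cdots<a_k$, a column of $Y_k$ whose index lies in $[a_{u-1}+1,a_u]$ is met precisely by those rows of length at least $a_u$.

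To establish $(\ref{eq:sc:1})$, I would sum row contributions. A row of length $a_t$ meets the leftmost $a_j$ columns in $\min(a_t,a_j)$ cells, which equals $a_t$ when $t<j$ and equals $a_j$ when $t\ge j$. Multiplying by the multiplicity $e_t$ and summing over $t=1,\ldots,k$ splits the sum at $t=j$ and gives $(\ref{eq:sc:1})$.

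To establish $(\ref{eq:sc:2})$, I would sum column contributions. For each $u=1,\ldots,k$, the $u$-th column block of $Y_k$ contains $a_u-a_{u-1}$ columns, each of length $\sum_{v=u}^{k}e_v$ by the observation above. The leftmost $a_j$ columns comprise exactly the first $j$ column blocks, so summing their lengths yields $\sum_{u=1}^{j}(a_u-a_{u-1})\sum_{v=u}^{k}e_v$, which is $(\ref{eq:sc:2})$.

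I do not anticipate any real obstacle: the argument is a routine double count, the only care needed being the boundary conventions $a_0=0$ and the endpoint case $j=1$. An alternative to the second step is to derive $(\ref{eq:sc:2})$ algebraically from $(\ref{eq:sc:1})$ by substituting $a_j=\sum_{u=1}^{j}(a_u-a_{u-1})$ into the term $a_j\sum_{t=j}^{k}e_t$ and then swapping the order of summation in the resulting double sum, which is a standard Abel-type rearrangement.
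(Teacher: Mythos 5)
Your proposal is correct and follows essentially the same argument as the paper: both identities are obtained by double-counting the cells in the leftmost $a_j$ columns of $Y_k$, once via horizontal (row-block) slices giving \eqref{eq:sc:1} and once via vertical (column-block) slices of widths $a_u-a_{u-1}$ giving \eqref{eq:sc:2}. The extra algebraic derivation of \eqref{eq:sc:2} from \eqref{eq:sc:1} is a fine but unnecessary addition.
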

\begin{proof}
    The first identity is obtained by partitioning the area covered by the leftmost $a_j$ columns of $Y_k$ into horizontal blocks of length at most $a_j$.
    The second identity is obtained by partitioning the same area into vertical blocks of widths $a_1, a_2-a_1,\ldots, a_j-a_{j-1}$.
\end{proof}

The value of $\SR_{j,k}$ depends on the relation between 
$a_j$ and the $e_i$'s. 

\begin{notation}
    For a diagram with $k$ row blocks and $m\in[1,k]$ let $\SE_{m,k}:=\sum_{t=m}^ke_t$, the number of rows in the upper $k-m+1$ row blocks.
    % For $i\in[2,k]$ and $a_j$ such that $a_j\ge \SE_{i, k}$ let 
    % \[\SR_{i,j,k}:=\sum_{t=i}^ke_ta_t+(a_j-\SE_{i,k})a_{i-1}.\] 
    % This is the sum of the row-lengths in the first upper $k-i+1$ blocks plus the sum of the lengths of $a_j-\SE_{i,k}$ rows in block $i-1$.
\end{notation}

\begin{claim}
    We have
    \begin{equation}\label{eq:sr:1}
        \SR_{j,k}=
        \begin{cases}
            a_ja_k & \text{if }a_j\le e_k,\\
            \sum_{t=i}^ke_ta_t+(a_j-\SE_{i,k})a_{i-1} & \text{if }\SE_{i,k}< a_j\le \SE_{i-1,k}\text{ for some }i\in[2,k],\\ 
            |Y_k| & \text{if } \SE_{1,k}\le a_j.
        \end{cases}    
    \end{equation}    
\end{claim}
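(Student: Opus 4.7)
The plan is to prove \eqref{eq:sr:1} by a direct case analysis on where the $a_j$-th row from the top of $Y_k$ falls. Recall that $Y_k$ has row blocks $1,2,\ldots,k$ from bottom to top, where block $t$ consists of $e_t$ rows of length $a_t$. Read from the top of $Y_k$, the first $e_k$ rows have length $a_k$, the next $e_{k-1}$ rows have length $a_{k-1}$, and so on down through the blocks. Correspondingly, $\SE_{m,k}=\sum_{t=m}^k e_t$ is the number of rows in the top $k-m+1$ blocks of $Y_k$, so the value $\SE_{i,k}$ is precisely the position (counted from the top) of the last row of block $i$.

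If $a_j\le e_k$, then the top $a_j$ rows of $Y_k$ all lie inside block $k$, so each has length $a_k$, which gives $\SR_{j,k}=a_j a_k$. This handles the first case. For the second case, suppose $\SE_{i,k}<a_j\le\SE_{i-1,k}$ for some $i\in[2,k]$. Then the top $a_j$ rows consist of every row in blocks $i, i+1,\ldots,k$ (contributing $\sum_{t=i}^k e_t a_t$) together with exactly $a_j-\SE_{i,k}$ additional rows from block $i-1$, each of length $a_{i-1}$. Summing gives the middle expression in \eqref{eq:sr:1}. For the third case, if $\SE_{1,k}\le a_j$, then $a_j$ is at least the total number of rows in $Y_k$, so the top $a_j$ rows include all of $Y_k$, and $\SR_{j,k}=|Y_k|$.

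Finally, the three cases exhaust all possibilities: either $a_j$ is at most $e_k=\SE_{k,k}$, or it lies in one of the half-open intervals $(\SE_{i,k},\SE_{i-1,k}]$ for some $i\in[2,k]$, or it is at least $\SE_{1,k}$. Since the row counts $\SE_{i,k}$ are weakly increasing as $i$ decreases from $k$ to $1$, every value of $a_j$ falls into exactly one case (with overlap at boundary points giving consistent values). There is no real obstacle here; the only thing to be mildly careful about is the indexing of $\SE_{i,k}$ versus $\SE_{i-1,k}$ to make sure the ``partial block'' contribution $(a_j-\SE_{i,k})a_{i-1}$ is attached to block $i-1$ rather than block $i$, which is why the case for $i=1$ (where no block $i-1$ exists) is stated separately as the third case.
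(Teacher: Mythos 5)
Your proof is correct and follows essentially the same route as the paper: the paper likewise treats the first and third cases as immediate and, for the middle case, observes that the top $a_j$ rows of $Y_k$ consist of blocks $i,\ldots,k$ in full plus $a_j-\SE_{i,k}$ rows of block $i-1$. Your additional remarks on exhaustiveness of the cases and consistency at the boundary values are fine but not needed beyond what the paper states.
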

\begin{proof}
    The first and third cases are obvious. For the second case, note that since $a_j>\SE_{i,k}$, the subdiagram consisting of the top $a_j$ rows contains the blocks $i,\ldots,k$ entirely, and  $a_j - \SE_{i,k}$ rows in block $i-1$.
\end{proof}

\begin{theorem}\label{thm:wideness_check}
    To check whether $Y$ is wide, we need to perform no more than the following tests:
\begin{equation}\label{eq:gen_wideness:1}
       a_k \ge \sum_{i=1}^ke_i,\;\;\;\text{ for }k=1,\ldots,p,
\end{equation}
    and for $k=1,\ldots,p$, and $j=1,\ldots,k-1$,
\begin{equation}\label{eq:gen_wideness:2}
         \sum_{t=i}^ke_ta_t+(a_j-\SE_{i,k})a_{i-1}\ge \sum_{t=1}^{j-1}a_te_t+a_j\sum_{t=j}^ke_t\quad\text{if}\quad \SE_{i,k}< a_j\le \SE_{i-1,k}\quad\text{for}\quad i\in[2,k]. 
    \end{equation}
\end{theorem}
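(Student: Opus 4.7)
The plan is to combine Theorem~\ref{thm:widecheck:1}, Observation~\ref{obs:condition_for_ak}, and the piecewise formula (\ref{eq:sr:1}) for $\SR_{j,k}$, arguing that two of the three branches of that formula always yield inequalities which are either trivial or already consequences of (\ref{eq:gen_wideness:1}).

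First, I would invoke Theorem~\ref{thm:widecheck:1} to reduce the wideness check to verifying $\SR_{j,k}\ge\SC_{j,k}$ as $k$ ranges over $[1,p]$ and $j$ ranges over $[1,k]$. For the diagonal case $j=k$, Observation~\ref{obs:condition_for_ak} rewrites the requirement as $\sum_{i=1}^{k}e_i\le a_k$, which is precisely (\ref{eq:gen_wideness:1}). For $j<k$, I would fix $k$ and $j$ and assume that (\ref{eq:gen_wideness:1}) has been verified at $k$ (otherwise wideness already fails); this supplies $\SE_{u,k}\le\SE_{1,k}\le a_k$ for every $u\in[1,k]$, which is the only consequence of the assumed tests I will use.

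Next I would split according to which of the three branches of (\ref{eq:sr:1}) contains $a_j$. In the first branch ($a_j\le e_k$), $\SR_{j,k}=a_ja_k$, and substituting the bound $\SE_{u,k}\le a_k$ term-by-term into (\ref{eq:sc:2}) yields $\SC_{j,k}\le a_k\sum_{u=1}^{j}(a_u-a_{u-1})=a_ja_k$, so the required inequality is automatic. In the third branch ($\SE_{1,k}\le a_j$), $\SR_{j,k}=|Y_k|$ and $\SC_{j,k}\le|Y_k|$ is immediate, since $\SC_{j,k}$ measures the area of a subset of $Y_k$. In the remaining (middle) branch, substituting (\ref{eq:sr:1}) and (\ref{eq:sc:1}) into $\SR_{j,k}\ge\SC_{j,k}$ produces exactly the inequality (\ref{eq:gen_wideness:2}).

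I do not anticipate any substantive obstacle beyond the bookkeeping of the three branches. The only slightly delicate point is the first branch, where the reduction relies on having (\ref{eq:gen_wideness:1}) for the same value of $k$; this is not circular because the statement of the theorem quantifies (\ref{eq:gen_wideness:1}) over all $k$, so the needed bound $\SE_{1,k}\le a_k$ is simply one of the assumed tests.
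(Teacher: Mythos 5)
Your proposal is correct and follows essentially the same route as the paper: reduce via Theorem~\ref{thm:widecheck:1} and Observation~\ref{obs:condition_for_ak}, read off \eqref{eq:gen_wideness:2} from the middle branch of \eqref{eq:sr:1} together with \eqref{eq:sc:1}, dismiss the branch $\SE_{1,k}\le a_j$ as trivial, and show the branch $a_j\le e_k$ is subsumed by \eqref{eq:gen_wideness:1} using \eqref{eq:sc:2} (your term-by-term bound $\SE_{u,k}\le a_k$ is the same estimate the paper makes, just chained in a slightly different order). No gaps.
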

 \begin{proof}
     By Theorem~\ref{thm:widecheck:1}, to establish wideness, we need to show that $\SR_{j,k}\ge \SC_{j,k}$ for all $1\le j\le k\le p$.
     By Observation~\ref{obs:condition_for_ak}, the inequality \eqref{eq:gen_wideness:1} is the case where $w=a_k$. 
     When $w\in\{a_1,\ldots, a_{k-1}\}$ and $\SE_{i,k}\le a_j\le \SE_{i-1,k}$ for some $i \in [2, k]$, then applying \eqref{eq:sc:1} and \eqref{eq:sr:1} to $\SR_{j,k}\ge \SC_{j,k}$ yields \eqref{eq:gen_wideness:2}. Now suppose that $a_j \leq e_k$. We prove that $\SR_{j,k}\ge \SC_{j,k}$ is implied by \eqref{eq:gen_wideness:1}. 
     To see this, note that by \eqref{eq:sc:2} when $a_j\le e_k$ the inequality $\SR_{j,k}\ge \SC_{j,k}$ is equivalent to 
     \begin{equation}\label{eq:gen_wideness:2:1}
         a_ja_k\ge \sum_{u=1}^j\left((a_u-a_{u-1})\sum_{v=u}^ke_v\right).
     \end{equation}
      But $$\sum_{u=1}^j\left((a_u-a_{u-1})\sum_{v=u}^ke_v\right)\le \sum_{u=1}^j\left((a_u-a_{u-1})\sum_{v=1}^ke_v\right)=a_j\sum_{v=1}^ke_v.$$ So, if \eqref{eq:gen_wideness:1} holds, then $a_ja_k\ge a_j\sum_{v=1}^ke_v$ holds, and this is stronger than \eqref{eq:gen_wideness:2:1}.
      If $\SE_{1,k}\le a_j$, then $\SR_{j,k}\ge \SC_{j,k}$ gives a trivial domination inequality. 
 \end{proof}   

\begin{corollary}\label{cor:wideness_check}
    The number of domination checks required to test whether a Young diagram is wide is at most a quadratic function of the number of distinct row lengths.
\end{corollary}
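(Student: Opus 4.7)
The plan is to obtain \tref{thm:wideness_check} as a black box and simply count how many inequalities it demands.

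First I would observe that the family \eqref{eq:gen_wideness:1} contributes exactly $p$ inequalities, one for each $k\in\{1,\ldots,p\}$. Next, for the family \eqref{eq:gen_wideness:2}, the parameters range over pairs $(j,k)$ with $1\le j\le k-1\le p-1$, which already gives the bound $\binom{p}{2}$ on the number of such pairs. The key subsidiary observation is that for each fixed pair $(j,k)$, the sequence $\SE_{1,k}\ge \SE_{2,k}\ge\cdots\ge \SE_{k,k}$ is non-increasing, so there is \emph{at most one} index $i\in[2,k]$ with $\SE_{i,k}<a_j\le \SE_{i-1,k}$; moreover, if $a_j\le e_k=\SE_{k,k}$ or $a_j>\SE_{1,k}$ then no such $i$ exists and \tref{thm:wideness_check} prescribes no test at all for that pair. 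Therefore \eqref{eq:gen_wideness:2} contributes at most $\binom{p}{2}$ inequalities.

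Putting this together, the total number of domination checks needed is at most
\[
 p+\binom{p}{2}=\frac{p(p+1)}{2},
\]
which is a quadratic function of $p$, the number of distinct row lengths. The only mildly delicate point is making sure that for each $(j,k)$ with $j<k$ the case analysis in \eqref{eq:sr:1} really produces at most one applicable inequality; but this is immediate from the monotonicity of $\SE_{i,k}$ in $i$, and the remaining cases $a_j\le e_k$ and $a_j>\SE_{1,k}$ are handled (without any additional test) inside the proof of \tref{thm:wideness_check}. No genuine obstacle arises; the corollary is essentially a bookkeeping consequence of the theorem.
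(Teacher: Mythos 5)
Your proposal is correct and takes essentially the same route as the paper, which states the corollary as an immediate counting consequence of \tref{thm:wideness_check}: there are $p$ checks of type \eqref{eq:gen_wideness:1}, and for each of the at most $\binom{p}{2}$ pairs $(j,k)$ with $j<k$ the monotonicity of $\SE_{i,k}$ in $i$ yields at most one applicable check of type \eqref{eq:gen_wideness:2}, giving at most $p+\binom{p}{2}=p(p+1)/2$ checks in total.
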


%--------------------------------------------------------------------------------------
\section{Outline rectangles and allocations}\label{sec:outline}

In \cite{Hilton} Hilton introduced the notion of an \emph{outline rectangle}, related to Latin squares:

\begin{definition}\label{d:OR}
	Let $C$ be an $m \times m$ matrix of multisets of symbols from $\{\tau_1, \tau_2, \ldots, \tau_m\}$. For $i \in [m]$ let $\rho_i$ be the sum of the cardinalities of the multisets in row $i$, let $c_i$ be the sum of the cardinalities of the multisets in column $i$, and let $\sigma_i$ be the number of occurrences of symbol $\tau_i$ among all the multisets. Then $C$ is an outline rectangle if there is some integer $n$ such that the following is satisfied:
	\begin{enumerate}[(i)]
		\item $n$ divides each of $\rho_i$, $c_i$, and $\sigma_i$,
		\item cell $(i, j)$ contains $\rho_ic_j/n^2$ symbols,
		\item the number of occurrences of $\tau_k$ in row $i$ is $\rho_i\sigma_k/n^2$, and
		\item the number of occurrences of $\tau_k$ in column $j$ is $c_j\sigma_k/n^2$.
	\end{enumerate}
\end{definition}
(The definition in \cite{Hilton} is more general, as the numbers of rows, columns, and symbols in $C$ may not be equal, but here we shall only need this special version).

Hilton showed in \cite{Hilton} that an outline rectangle can be obtained from any $n\times n$ Latin square $L$ as follows. Let $P=(p_i)_{i=1}^m$, $Q=(q_i)_{i=1}^m$, and $S=(s_i)_{i=1}^m$ be sequences of positive integers satisfying $\sum_{i=1}^mp_i=\sum_{i=1}^mq_i=\sum_{i=1}^ms_i=n$. Construct the outline rectangle $C$ from $L$ by amalgamating rows  
$p_0+\cdots +p_{i-1}+1,\ldots,p_0+\cdots +p_i$, columns $q_0+\cdots +q_{i-1}+1,\ldots,q_0+\cdots +q_i$, and symbols $s_0+\cdots +s_{i-1}+1,\ldots,s_0+\cdots +s_i$, for $i=1,\ldots,m$, where we interpret $p_0=q_0=s_0=0$. 
The array $C$ thus obtained is called \emph{the reduction modulo $(P,Q,S)$ of $L$}. 
Hilton~\cite{Hilton} also showed that this works both ways, that is, we can obtain a Latin square from any outline rectangle:

\begin{theorem}\label{thm:Hilton}
    Each outline rectangle $C$ is the reduction modulo $(P,Q,S)$ of some Latin square $L$, for some sequences $P$, $Q$, and $S$.
\end{theorem}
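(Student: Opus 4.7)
\noindent The plan is to induct on $\Delta := (n - |P|) + (n - |Q|) + (n - |S|)$, working with the more general notion of outline rectangle (allowing unequal numbers of rows, columns, and symbols) mentioned after Definition~\ref{d:OR}; here $|P|, |Q|, |S|$ denote the lengths of the implied multiplicity sequences $p_i := \rho_i/n$, $q_j := c_j/n$, $s_k := \sigma_k/n$. The base case $\Delta = 0$ has every $p_i = q_j = s_k = 1$, so conditions (ii)--(iv) force each cell to contain exactly one symbol, with each symbol appearing once per row and once per column: $C$ itself is a Latin square of order $n$, as required.

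For the inductive step, at least one of $P, Q, S$ contains an entry at least $2$. By the symmetry of rows, columns and symbols in the definition of outline rectangle, I may assume $s_1 \ge 2$. Pick any decomposition $s_1 = s' + s''$ with $s', s'' \ge 1$; I will refine $C$ to an outline rectangle $C'$ for the symbol sequence $(s', s'', s_2, \ldots)$ by splitting the class $\tau_1$ into two new classes $\tau_1'$ and $\tau_1''$. Since $\Delta$ strictly decreases under this refinement, the inductive hypothesis then supplies a Latin square $L$ whose reduction yields $C'$, and hence also $C$ (by the obvious further amalgamation of $\tau_1'$ and $\tau_1''$).

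The refinement amounts to choosing, for each cell $(i,j)$, how many of the $n_{ij}$ occurrences of $\tau_1$ in that cell become $\tau_1'$ (the rest become $\tau_1''$); call this count $x_{ij}$. The resulting array satisfies (i)--(iv) precisely when the $x_{ij}$ are non-negative integers with $x_{ij} \le n_{ij}$, row sums $\sum_j x_{ij} = p_i s'$, and column sums $\sum_i x_{ij} = q_j s'$. From (iii)--(iv) applied to $C$ one has $\sum_j n_{ij} = p_i s_1$ and $\sum_i n_{ij} = q_j s_1$, so the rational assignment $x_{ij} = n_{ij} s'/s_1$ is feasible for this capacitated transportation problem. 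Its constraint matrix is totally unimodular, so the feasible polytope has an integer vertex; any such vertex provides an admissible integer $x$.

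Iterating the split until $\Delta = 0$ produces the desired Latin square $L$, which by construction reduces modulo $(P, Q, S)$ to $C$. The main obstacle is the integrality of the single refinement step, which I am handling via the transportation/flow argument above; an alternative route via an edge-colouring theorem for bipartite multigraphs (or a Hall-type result on common transversals) would serve equally well if the polyhedral argument turns out to be awkward to cite.
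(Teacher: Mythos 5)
The paper does not prove Theorem~\ref{thm:Hilton} at all: it is quoted from Hilton \cite{Hilton}, so there is no in-paper argument to compare against. Judged on its own, your sketch is essentially sound, and it takes a different route from Hilton's original treatment, which works via equitable (balanced) edge-colourings of bipartite multigraphs rather than by splitting one amalgamated class at a time. Your single-split step is correct: encoding $C$ by the counts $N(i,j,k)$ of $\tau_k$ in cell $(i,j)$, a split of the symbol class $\tau_1$ into parts of sizes $s',s''$ only has to respect conditions (iii) and (iv) for the two new classes (condition (ii) is untouched, as the definition imposes no finer per-cell requirements), which is exactly your capacitated transportation problem; the fractional point $x_{ij}=n_{ij}s'/s_1$ certifies feasibility, total unimodularity (equivalently integral flows) gives an integral point, and re-amalgamating $\tau_1'$ with $\tau_1''$ recovers $C$ because $s'+s''=s_1$ splits a consecutive symbol interval into two consecutive intervals, so the composed amalgamations are precisely the reduction modulo $(P,Q,S)$. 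Two places want more care in a written-up version. First, the ``by symmetry I may assume $s_1\ge2$'' step should be justified explicitly --- conditions (ii)--(iv) are permuted among themselves under the row/column/symbol conjugacy of the array $N$, and reduction commutes with conjugation of Latin squares --- or simply avoided by running the identical argument for a row split (variables $y_{jk}\le N(1,j,k)$, with cell-size sums prescribed by (ii) and symbol sums by (iii)) and for a column split; both work verbatim. Second, your induction necessarily lives in the general rectangular notion of outline rectangle (after a split the numbers of row, column and symbol classes differ), so you should state those conditions, observe that (ii) forces $\sum_i\rho_i=\sum_j c_j=\sum_k\sigma_k=n^2$, hence $\sum_i p_i=\sum_j q_j=\sum_k s_k=n$ with all multiplicities positive, which is what makes $\Delta\ge0$, guarantees the base case $\Delta=0$ is a genuine Latin square of order $n$, and lets the descent terminate.
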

% In this case, we say that $O$ lifts to $L$.

Inspired by this idea, we introduce the notion of an \emph{allocation}:

    \begin{figure}[h!]
    \centering
    \includegraphics[width=6.0cm]{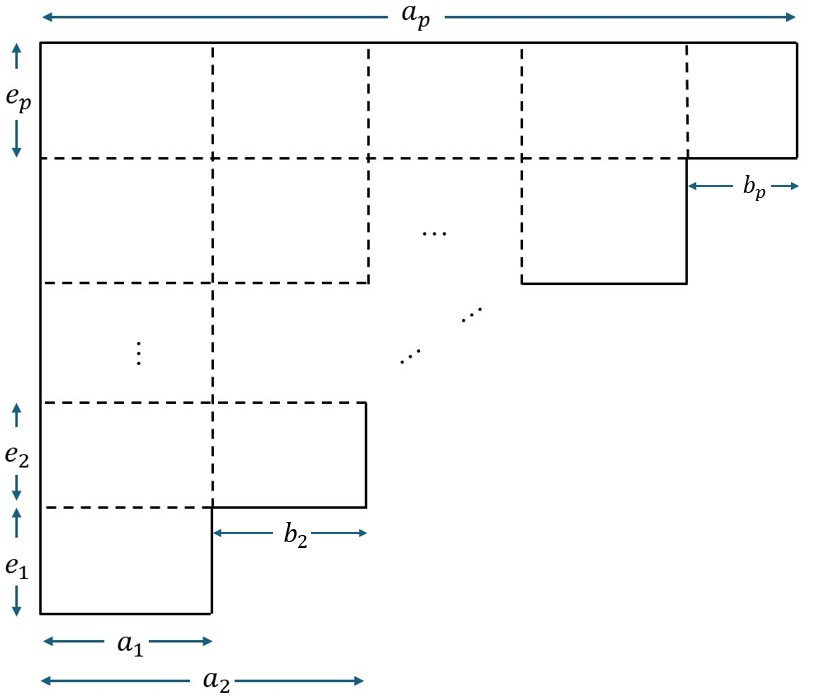}
    \caption{Rectangular regions for an allocation.}
    \label{fig:allocation}
    \end{figure}

\begin{definition}\label{d:allocation}
Let $Y$ be a Young diagram consisting of $e_i$ rows of size $a_i$ for $1\le i\le p$, where $a_1 < a_2 < \cdots < a_p$, as shown in \fref{fig:allocation}. Define $a_0=0$ and $b_i=a_i-a_{i-1}$ for $1\le i\le p$. An \emph{allocation} for $Y$ is a collection of nonnegative integers $z_{ijk}$ for $1\le j,k\le i\le p$, satisfying
\begin{align}
%&z_{ijk}=0&&\text{when $j>i$ or $k>i$},\nonumber\\
&\sum_{k=1}^i z_{ijk}=e_ib_j&&\text{for }1\le j\le i\le p,\label{e:sumk}\\
&\sum_{j=1}^i z_{ijk}=e_ib_k&&\text{for }1\le k\le i\le p,\label{e:sumj}\\
&\smashoperator[l]{\sum_{i=\max(j,k)}^p} z_{ijk}\le b_jb_k&&\text{for }1\le j,k\le p.\label{e:sumi}
\end{align}

\end{definition}

Our first major result regarding allocations states that wideness is a necessary condition for the existence of an allocation.

\begin{theorem}\label{t:allocwide}
    If a Young diagram $Y$ has an allocation, then it is wide. 
\end{theorem}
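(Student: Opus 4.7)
The plan is to reduce the wideness of $Y$ to a family of dominance inequalities parameterised by blocks, and then derive each of them by reading off the relevant counts from the allocation. By \tref{thm:widecheck:1}, $Y$ is wide as soon as $\SR_{j,k}\ge \SC_{j,k}$ holds for every $1\le j\le k\le p$, so I would focus entirely on proving those inequalities directly from \eqref{e:sumk}--\eqref{e:sumi}.

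The main construction would be the following. Fix $k$ and, for $J,K\in[1,k]$, set $M_{JK}=\sum_{i=\max(J,K)}^k z_{iJK}$. Intuitively this is the number of cells in column block $J$ of $Y_k$ carrying a symbol from block $K$. The allocation axioms immediately supply three estimates: (i) the entry bound $M_{JK}\le b_Jb_K$ from \eqref{e:sumi} (truncating the sum at $i=k$ only drops nonnegative terms); (ii) the row sum $\sum_{K=1}^k M_{JK}=b_J\SE_{J,k}$, obtained by swapping the order of summation and applying \eqref{e:sumk}; and (iii) the column sum $\sum_{J=1}^k M_{JK}=b_K\SE_{K,k}$, obtained analogously from \eqref{e:sumj}.

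With $M$ in hand, the dominance inequality is a short double count. Using the row sums, $\SC_{j,k}=\sum_{J=1}^j b_J\SE_{J,k}=\sum_{K=1}^k\sum_{J=1}^j M_{JK}$. For each fixed $K$, the partial column sum $\sum_{J=1}^j M_{JK}$ is bounded both by $a_jb_K$ (entry bound, summed over $J=1,\ldots,j$) and by the full column sum $b_K\SE_{K,k}$, hence by $b_K\min(a_j,\SE_{K,k})$. Summing over $K$ gives
\[
\SC_{j,k}\le\sum_{K=1}^k b_K\min(a_j,\SE_{K,k}),
\]
and the right-hand side is exactly $\SR_{j,k}$, because column block $K$ of $Y_k$ consists of $b_K$ columns of length $\SE_{K,k}$, each contributing $\min(a_j,\SE_{K,k})$ cells to the top $a_j$ rows.

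I don't foresee a serious obstacle. The only nontrivial bookkeeping is verifying that the compact form $\sum_K b_K\min(a_j,\SE_{K,k})$ of $\SR_{j,k}$ matches the case-by-case formula \eqref{eq:sr:1}; this is a straightforward rewriting of the same area in terms of the column lengths of $Y_k$. Everything else is a one-line consequence of the allocation axioms combined with the simple observation that $\min$ of two valid upper bounds is still an upper bound.
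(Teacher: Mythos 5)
Your proposal is correct and takes essentially the same route as the paper: reduce wideness via \tref{thm:widecheck:1} to the inequalities $\SR_{j,k}\ge\SC_{j,k}$, and then double count using the truncated sums $\sum_{i=\max(J,K)}^{k}z_{iJK}$, bounded on one hand by $b_Jb_K$ via \eqref{e:sumi} and on the other by the full row/column sums via \eqref{e:sumk} and \eqref{e:sumj}, to reach $\SC_{j,k}\le\sum_{K}b_K\min(a_j,\SE_{K,k})=\SR_{j,k}$. The only (immaterial) difference is that your count is the transpose of the paper's --- you fix a symbol block and bound its occurrences in the leftmost column blocks, while the paper fixes a column block and bounds the occurrences of the smallest symbol blocks in it --- which coincide because the allocation constraints are symmetric in the column and symbol indices.
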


\begin{proof}    
    Assuming that $Y$ has an allocation, we will use \tref{thm:widecheck:1} to
establish that $Y$ is wide. Let $1\le j_0\le
i_0\le p$ and consider the subdiagram $Y' = Y_{i_0}$. The
total number of cells in the leftmost $a_{j_0}$ columns of $Y'$ is
\begin{equation}\label{e:zijkact}
  \sum_{j=1}^{j_0}\sum_{i=j}^{i_0}b_je_i
  =\sum_{k=1}^{j_0}\sum_{i=k}^{i_0}b_ke_i
  =\sum_{k=1}^{j_0}\sum_{i=k}^{i_0}\sum_{j=1}^{i}z_{ijk}
  =\sum_{j=1}^{i_0}\sum_{k=1}^{j_0}\sum_{i=\max(j,k)}^{i_0}z_{ijk},
\end{equation}
by \eref{e:sumj}. Now, by \eref{e:sumi},
\begin{equation}\label{e:zijkapp1}
  \sum_{k=1}^{j_0}\sum_{i=\max(j,k)}^{i_0}z_{ijk}
  \le\sum_{k=1}^{j_0}b_jb_k=b_j\sum_{k=1}^{j_0}b_k=b_ja_{j_0}.
\end{equation}
On the other hand, 
\begin{equation}\label{e:zijkapp2}
  \sum_{k=1}^{j_0}\sum_{i=\max(j,k)}^{i_0}z_{ijk}
  \le\sum_{k=1}^{i_0}\sum_{i=\max(j,k)}^{i_0}z_{ijk}
  =\sum_{i=j}^{i_0}\sum_{k=1}^{i}z_{ijk}
  =\sum_{i=j}^{i_0}b_je_i,
\end{equation}
by \eref{e:sumk}. Combining \eref{e:zijkapp1} and \eref{e:zijkapp2} we find
that \eref{e:zijkact} is no more than
\begin{equation*}
\sum_{j=1}^{i_0}b_j\min\left(a_{j_0},\sum_{i=\smash{j}}^{i_0}e_i\right),
\end{equation*}
which is the number of cells in the top $a_{j_0}$ rows of $Y'$.  
    It follows that $Y$ is wide. 
\end{proof}

If \cref{conj:wide_implies_outline} is true, then the converse of \tref{t:allocwide} also holds.

Our next major result on allocations is an analogue of Theorem~\ref{thm:Hilton} for Young diagrams.

\begin{theorem}\label{thm:alloc_implies_latin}
A Young diagram $Y$ has an allocation if and only if $Y$ is Latin.
\end{theorem}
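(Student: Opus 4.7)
\noindent\emph{Proof plan.} The ``if'' direction (Latin filling implies allocation) is elementary counting. Given a Latin filling $F$ of~$Y$, I would define $z_{ijk}$ to be the number of cells in the intersection of row block~$i$ and column block~$j$ whose $F$-value lies in the symbol block~$S_k$. Equation \eref{e:sumk} then holds because that sub-rectangle has $e_ib_j$ cells; \eref{e:sumj} holds because each of the $e_i$ rows in row block~$i$ contains each of the $b_k$ symbols of $S_k$ exactly once (since these rows contain $\{1,\ldots,a_i\}\supseteq S_k$); and \eref{e:sumi} holds because column block~$j$ is a union of $b_j$ columns with pairwise distinct entries, so each of the $b_k$ symbols of $S_k$ contributes at most $b_j$ occurrences, giving the bound $b_jb_k$.

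For the ``only if'' direction I would invoke \tref{thm:Hilton}: the plan is to construct an outline rectangle~$C$ from the allocation whose associated Latin square restricts to a Latin filling of~$Y$. Set $n:=a_p$. Summing \eref{e:sumj} (with $k=1$) over $i\in[p]$ and combining with \eref{e:sumi} (with $k=1$) applied to each $j$ yields $\sum_{i=1}^pe_i\cdot b_1\le a_pb_1$, so we may set $e_{p+1}:=a_p-\sum_{i=1}^pe_i\ge0$. I would then form a $(p{+}1)\times(p{+}1)$ outline array $C$ with row groups of sizes $e_1,\ldots,e_{p+1}$ and column and symbol groups of sizes $b_1,\ldots,b_p,0$, and declare the cell at (row block~$i$, column block~$j$) to contain $Z_{ijk}$ copies of the $k$-th symbol type, where $Z_{ijk}=z_{ijk}$ when $1\le j,k\le i\le p$; $Z_{ijk}=0$ when $i\le p$ and exactly one of $j,k$ exceeds~$i$; and the remaining entries (those with $i\le p$ and $j,k>i$, or those in the phantom row block $i=p{+}1$) are to be chosen so that the outline rectangle marginals $\sum_kZ_{ijk}=e_ib_j$, $\sum_jZ_{ijk}=e_ib_k$, and $\sum_iZ_{ijk}=b_jb_k$ all hold. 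Once $C$ is built, \tref{thm:Hilton} delivers an $a_p\times a_p$ Latin square~$L$ whose amalgamation modulo the chosen partitions reproduces~$C$; the forced zeros guarantee that each row in row block~$i$ of~$L$ has its first $a_i$ entries filling exactly $\{1,\ldots,a_i\}$, so the restriction of~$L$ to the cells of~$Y$ is the desired Latin filling.

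The principal obstacle is filling in the unspecified entries of~$C$ as nonnegative integers consistently. For each $i\le p$, the entries $\{Z_{ijk}:j,k>i\}$ form a two-dimensional integer transportation instance with consistent marginals $e_ib_j$ and $e_ib_k$ (both summing to $e_i(a_p-a_i)$); the phantom layer $i=p{+}1$ is analogous. These layers couple only through the column-symbol marginal, which after isolating the given contributions rearranges to
\[
\sum_{i<\min(j,k)}Z_{ijk}+Z_{p+1,j,k}\;=\;b_jb_k-\sum_{i\ge\max(j,k)}z_{ijk},
\]
whose right-hand side is nonnegative precisely by~\eref{e:sumi}. A direct check shows that the totals of the two sides over all $(j,k)$ agree (both equal $a_p^2-\sum_ie_ia_i$), so a standard integrality/flow argument---routing the column-symbol ``deficits'' through the layers while respecting the $e_ib_j,e_ib_k$ row/column marginals already fixed on each layer---produces a joint nonnegative integer solution, completing the construction of~$C$ and, via \tref{thm:Hilton}, the Latin filling.
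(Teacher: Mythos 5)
Your ``if'' direction (Latin filling $\Rightarrow$ allocation) is exactly the paper's argument and is fine. The gap is in the ``only if'' direction, and it is not merely a missing routine verification: with your choice $n=a_p$ the completion of the outline array need not exist at all, so no ``standard integrality/flow argument'' can produce it. The coupled system you describe is a planar (two-marginal) multi-layer transportation problem, and consistency of the aggregate totals does not imply feasibility. Concretely, take the Young diagram $(4,3,3)$, so $a_1=3$, $a_2=4$, $b_1=3$, $b_2=1$, $e_1=2$, $e_2=1$, with the valid allocation $z_{111}=6$, $z_{211}=3$, $z_{212}=z_{221}=0$, $z_{222}=1$ (this is the allocation appearing in the paper's remark after Theorem~\ref{thm:wide_implies_alloc}; it satisfies \eqref{e:sumk}--\eqref{e:sumi}). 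In your scheme the cell (row block $1$, column block $2$) is forced, via the cell-size and row-symbol marginals together with your prescribed zeros, to contain $e_1b_2=2$ copies of symbol block $2$, while the column-symbol marginal demands $\sum_i Z_{i22}=b_2b_2=1$; but the forced contributions already give $Z_{122}+z_{222}=2+1=3>1$. Equivalently, in an order-$4$ Latin square both length-$3$ rows would have to place the symbol $4$ in column $4$, which is impossible. So your construction breaks for a legitimate allocation, and the theorem cannot be reached this way without a substantially different completion argument. (A smaller issue: your column/symbol groups of size $0$ do not fit the square version of Hilton's theorem stated in the paper, though that could be patched.)

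The paper avoids this obstruction by taking $n=2a_p$ and adjoining a phantom column block and a phantom symbol block, each of size $b_{p+1}=a_p$, in addition to the phantom row block of size $e_{p+1}=n-\sum_{i\le p}e_i$. The extra width and the extra symbols create enough slack that all remaining entries can be written down explicitly (as in \eqref{eq:def:zijk}): the overflow cells of row block $i$ beyond column $a_i$ are filled entirely with phantom symbols, the displaced symbols of blocks $k>i$ are parked in the phantom column block, and the phantom row absorbs the residual column-symbol deficits $b_jb_k-\sum_{i\ge\max(j,k)}z_{ijk}$, which are nonnegative precisely by \eqref{e:sumi}. Nonnegativity and all four outline-rectangle conditions are then verified by direct computation from \eqref{e:sumk}--\eqref{e:sumi} (plus $\sum_i e_i\le a_p$, which, as you correctly note, follows from the allocation), and Hilton's theorem finishes the proof. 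If you want to salvage your plan, you need some analogous source of slack; as it stands, the feasibility claim at the heart of your construction is false.
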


\begin{proof}
If $Y$ is Latin, then it has an allocation; specifically, given a Latin filling of $Y$ let $z_{ijk}$, for $1\le j,k\le i\le p$, be the number of symbols from symbol block $k$ that appear in the rectangular region in $Y$ where row block $i$ and column block $j$ intersect (as illustrated in Figure~\ref{fig:allocation}).

    Suppose that $Y$ consists of $e_i$ rows of size $a_i$ for $1\le i\le p$, where $a_1 < a_2 < \cdots < a_p$. Henceforth, we may assume that $Y$ has an allocation $Z'=(z'_{ijk})_{1\le j,k\le i\le p}$. We shall embed the underlying array of $Z'$ in a $(p+1)\times(p+1)$ array and complete this array to an outline rectangle, as illustrated in Figure~\ref{fig:embedding}. 

    \begin{figure}[h!]
    \centering
    \includegraphics[width=4.0cm]{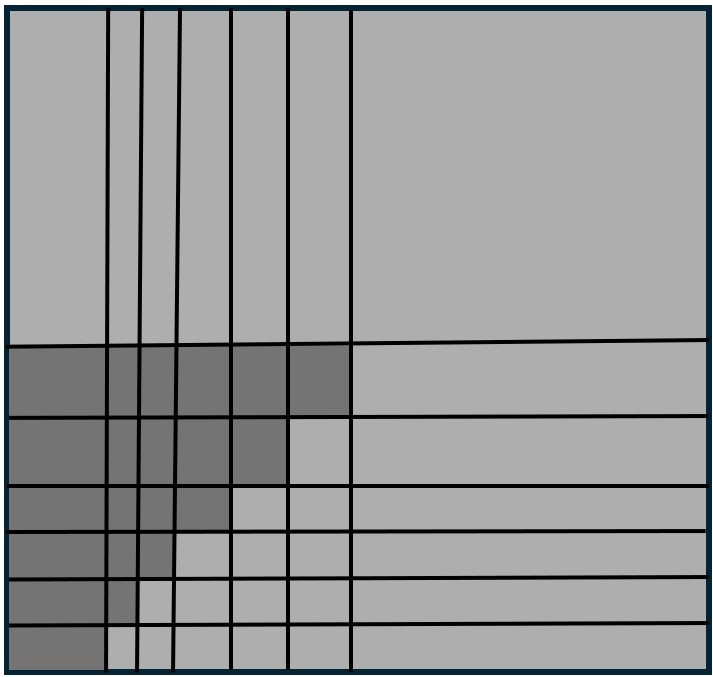}
    \caption{Embedding an allocation in an outline rectangle. The size of each cell is indicative of the size of the multiset that is assigned to it.}
    \label{fig:embedding}
    \end{figure}
        
    Let $n=2a_p$, let $b_{p+1} = a_p$, and let $e_{p+1} = n-(e_1+e_2+\cdots+e_p)$.
    Define
\begin{equation}\label{eq:def:zijk}
    \begin{split}
        z_{ijk} = \begin{cases}
        z'_{ijk} & \text{if } \max(j,k) \leq i \leq p,\\        
        0 & \text{if } i <\max(j,k) \leq p, \text{ or } j \leq i \leq p<k=p+1, \\        
        e_ib_j & \text{if } i < j \leq p< k=p+1,\\        
        e_ib_k-(z_{i1k}+z_{i2k}+\cdots+z_{ipk}) & \text{if } i < j=p+1, \\        
        b_jb_k-(z_{1jk}+z_{2jk}+\cdots+z_{pjk}) & \text{if } i=p+1.		 
        \end{cases}
    \end{split}    
\end{equation}

    We verify that each $z_{ijk}$ is nonnegative. 
    The first three cases in \eqref{eq:def:zijk} are obvious, since $(z'_{ijk})$ is an allocation. 
    For the fourth case, suppose $k\le i \leq p$. Then
    \[
    e_ib_k-\sum_{\ell=1}^{p}z_{i\ell k} = e_ib_k-\sum_{\ell=1}^{i}z'_{i\ell k} = e_ib_k-e_ib_k=0,
    \]
    from \eref{e:sumj}. If $p\ge k > i$ then
    \[
    e_ib_k-\sum_{\ell=1}^{p}z_{i\ell k} =e_ib_k \geq 0.
    \]
    Finally, 
    \[
    e_ib_{p+1}-\sum_{\ell=1}^{p}z_{i\ell(p+1)} = e_ib_{p+1}-\sum_{\ell=i+1}^{p} e_ib_\ell = e_i\left(b_{p+1}-\sum_{\smash{\ell}=i+1}^{p}b_\ell\right) \geq 0.
    \]
    The last inequality holds since  $b_{p+1}=a_p$.
    We conclude that $z_{i(p+1)k} \geq 0$ whenever $i\le p$. 
    
    For the fifth case, let $i=p+1$. If $\max(j,k)\le p$ then, from \eref{e:sumi}
    \[
    b_jb_k-\sum_{\ell=1}^{p} z_{\ell jk} = b_jb_k-\sum_{\ell=\max(j,k)}^{p} z'_{\ell jk} \geq b_jb_k-b_jb_k = 0.
    \]
    If $k = p+1$, then
    \[
    b_jb_{k}-\sum_{\ell=1}^{p} z_{\ell jk} \ge b_jb_{p+1}-\sum_{\ell=1}^{p}e_\ell b_j = b_j\left(b_{p+1}-\sum_{\ell=1}^{p}e_\ell\right) \geq 0,
    \]
    since $b_{p+1}=a_p$ and $Y$ is wide. Similarly, if $j=p+1$, then
    \[
    b_jb_{k}-\sum_{\ell=1}^{p} z_{\ell jk} \ge b_{p+1}b_k-\sum_{\ell=1}^{p}e_\ell b_k = b_k\left(b_{p+1}-\sum_{\ell=1}^{p}e_\ell\right) \geq 0.
    \]
    We have shown that $z_{ijk} \geq 0$ for all $i$, $j$ and $k$.
    
    Let $\{\tau_1, \tau_2, \ldots, \tau_{p+1}\}$ be a set of $p+1$ distinct symbols and let $C$ be the $(p+1) \times (p+1)$ matrix where cell $(i, j)$ contains $z_{ijk}$ occurrences of symbol $\tau_k$ for each $k \in [p+1]$. 
    We will prove that $C$ is an outline rectangle. For $i \in [p+1]$, let $\rho_i$ be the number of symbols that occur in row $i$, let $c_i$ be the number of symbols that occur in column $i$, and let $\sigma_i$ be the number of occurrences of $\tau_i$ in $C$. So if $i < p+1$, then
\begin{equation}\label{e:rhoi}
    \rho_i = \sum_{j=1}^{p+1}\sum_{k=1}^{p+1} z_{ijk} 
    = \sum_{k=1}^{p+1} \left(z_{i(p+1)k}+\sum_{\smash{j}=1}^{p} z_{ijk}  \right) 
    = \sum_{k=1}^{p+1} b_ke_i 
    = ne_i.
\end{equation}
    Similarly, 
    \[
    \rho_{p+1} 
    %= \sum_{j=1}^{p+1}\sum_{k=1}^{p+1} z_{(p+1)jk} 
    = \sum_{j=1}^{p+1}\sum_{k=1}^{p+1}\left(b_jb_k-\sum_{\ell=1}^{p}z_{\ell jk}\right)
    = \sum_{j=1}^{p+1} b_j\sum_{k=1}^{p+1}b_k-\sum_{\ell=1}^{p}\sum_{j=1}^{p+1}\sum_{k=1}^{p+1} z_{\ell jk}
    =n^2-\sum_{\ell=1}^pne_\ell
    =n^2-n(n-e_{p+1})
    = ne_{p+1},
    \]
    using \eref{e:rhoi}.
    Similar calculations show that $c_i = b_in$ and $\sigma_i = b_in$ for each $i \in [p+1]$. Thus, condition (i) of \dref{d:OR} is satisfied.
    
    Let $(i, j) \in [p+1]^2$. First, suppose that $j \leq i \leq p$. Then the number of symbols in cell $(i, j)$ of $C$ is
    \begin{equation}\label{eq:j_le_i}
        \sum_{k=1}^{p+1} z_{ijk} = \sum_{k=1}^{i} z'_{ijk} = e_ib_j,
    \end{equation}
    from \eref{e:sumk}. If $i < j \leq p$ then the number of symbols in cell $(i, j)$ of $C$ is 
    \begin{equation}\label{eq:j_>_i}
        \sum_{k=1}^{p+1} z_{ijk} = z_{ij(p+1)} = e_ib_j.
    \end{equation}
    If $i=p+1$, then the number of symbols in cell $(i, j)$ is
    \[
    \sum_{k=1}^{p+1} z_{(p+1)jk} = \sum_{k=1}^{p+1}\left(b_jb_k-\sum_{\ell=1}^{p}z_{\ell jk}\right) = nb_j-\sum_{\ell=1}^{p}\sum_{k=1}^{p+1} z_{\ell jk} = nb_j-\sum_{\ell=1}^{p}e_\ell b_j = e_{p+1}b_j,
    \]
    by \eqref{eq:j_le_i} and \eqref{eq:j_>_i}.
    Finally, if $i<j=p+1$ then the number of symbols in cell $(i, j)$ is
    \[
    \sum_{k=1}^{p+1} z_{i(p+1)k} = \sum_{k=1}^{p+1}\left(b_ke_i-\sum_{\smash{\ell}=1}^{p}z_{i\ell k}\right) = ne_i-\sum_{\ell=1}^{p}\sum_{k=1}^{p+1}z_{i\ell k} = ne_i-\sum_{\ell=1}^{p}b_\ell e_i = b_{p+1}e_i.
    \]
    Thus, condition (ii) of \dref{d:OR} is satisfied.
    
    The number of occurrences of $\tau_k$ in row $i < p+1$ is
\begin{equation}\label{eq:num_tau_k_row_i}
        \sum_{j=1}^{p+1} z_{ijk} =z_{i(p+1)k}+\sum_{j=1}^{p} z_{ijk} %= \sum_{j=1}^{p} z_{ijk}+e_ib_k-\sum_{j=1}^{p} z_{ijk} 
        = e_ib_k.
    \end{equation}
    The number of occurrences of $\tau_k$ in row $i=p+1$ is
    \[
    \sum_{j=1}^{p+1} z_{(p+1)jk} = \sum_{j=1}^{p+1} \left(b_jb_k-\sum_{\ell=1}^{p} z_{\ell jk}\right) = nb_k-\sum_{\ell=1}^{p}\sum_{j=1}^{p+1} z_{\ell jk} = nb_k-\sum_{\ell=1}^pe_\ell b_k=b_ke_{p+1},
    \]
    by \eqref{eq:num_tau_k_row_i}. Thus, condition (iii) of \dref{d:OR} is satisfied.
    
    The number of occurrences of $\tau_k$ in column $j$ is
    \[
    \sum_{i=1}^{p+1} z_{ijk}= z_{(p+1)jk}+\sum_{i=1}^{p} z_{ijk}
    %= \sum_{i=1}^{p} z_{ijk}+b_jb_k-\sum_{i=1}^{p}z_{ijk} 
    = b_jb_k.
    \]
    Therefore, condition (iv) of \dref{d:OR} is satisfied and so $C$ is an outline rectangle. 
    By Theorem~\ref{thm:Hilton}, there is a Latin square $L$ of order $n$ and a triple $(u_1+\cdots+u_{p+1}, v_1+\cdots+v_{p+1}, w_1+\cdots+w_{p+1})$ of partitions of $n$ such that the submatrix of $L$ induced by the rows $X_i$ and columns $Y_j$ has exactly $z_{ijk}$ symbols from the set $Z_k$. 
    Here $X_i = \{u_{i-1}+1, \ldots, u_i\}$,  $Y_i = \{v_{i-1}+1, \ldots, v_i\}$ and $Z_k=\{w_{k-1}+1, \ldots, w_k\}$ for $i \in [p+1]$, where we interpret $u_0=v_0=w_0=0$. 
    Consider the filled Young diagram obtained from $L$ by removing all submatrices induced by rows $X_i$ and columns $Y_j$ where $p+1 \in \{i, j\}$ or $j > i$. This diagram has the same shape as $Y$. The inherited filling does not contain any repeated symbol in a row or column since $L$ is a Latin square. Furthermore, let $j \leq i \leq p$ and consider the submatrix with rows $X_i$ and $Y_j$. 
    The number of symbols in this submatrix from $\{a_{k-1}+1, \ldots, a_k\}$ is $z'_{ijk} = 0$ if $k > i$, meaning that no symbol is larger than $a_i$. 
    Therefore,  $Y$ has a Latin filling.
\end{proof}

From \tref{thm:alloc_implies_latin} we get the fact, as mentioned in Section~\ref{sec:introduction}, that \cref{conj:wide_implies_outline} is equivalent to \cref{conj:wpc}. \tref{thm:alloc_implies_latin} also provides us with an alternative proof of \tref{t:allocwide}, since it is known from \cite{chow2003} that all Latin Young diagrams are wide.

%-------------------------------------------------------------------------------------------------

\section{Young diagrams with three row lengths}\label{sec:three}

In this section, we prove Conjecture~\ref{conj:wide_implies_outline} when the Young diagram has at most three distinct row lengths. This, together with  Theorem~\ref{thm:alloc_implies_latin}, will imply Theorem~\ref{thm:three}.

\begin{theorem}\label{thm:wide_implies_alloc}
    A wide Young diagram with three distinct row lengths has an allocation.
\end{theorem}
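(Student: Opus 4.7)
The plan is to construct the allocation $(z_{ijk})$ explicitly, by exploiting the $j \leftrightarrow k$ symmetry of the defining conditions \eref{e:sumk}, \eref{e:sumj}, and \eref{e:sumi}. Since each of these is symmetric under swapping $j$ and $k$, it suffices to look for a symmetric allocation with $z_{ijk} = z_{ikj}$. Then $z_{111} = e_1 b_1$ is forced by \eref{e:sumk}; the symmetric $2 \times 2$ block $(z_{2jk})$ carries one free parameter $s = z_{212} = z_{221}$, with $z_{211} = e_2 b_1 - s$ and $z_{222} = e_2 b_2 - s$; and the symmetric $3 \times 3$ block $(z_{3jk})$ carries three free parameters, conveniently taken as the off-diagonal entries $B = z_{312}$, $C = z_{313}$, $E = z_{323}$, with the diagonal entries then determined by $z_{311} = e_3 b_1 - B - C$, $z_{322} = e_3 b_2 - B - E$, and $z_{333} = e_3 b_3 - C - E$.

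Next, the six distinct linking inequalities $\sum_i z_{ijk} \le b_j b_k$ for $1 \le j \le k \le 3$, together with nonnegativity of all fourteen entries, translate to a system of linear inequalities in the four parameters $s, B, C, E$. The goal is to show that this system admits a nonnegative integer solution whenever $Y$ is wide. A natural starting point is to take $s$ as prescribed by the $p = 2$ analysis applied to the subdiagram $Y_2$ (roughly $s = \max(0, b_1(e_1+e_2-a_1))$), handling the linking constraints that only involve $i \le 2$, and then to choose $B, C, E$ to meet the constraints involving row block $i = 3$. Each of the wideness inequalities catalogued in Theorem~\ref{thm:wideness_check}---the three conditions $a_k \ge e_1 + \cdots + e_k$ from Observation~\ref{obs:condition_for_ak} and the domination inequalities \eref{eq:gen_wideness:2} for $k \in \{2, 3\}$---should play a role in making a particular linking constraint slack enough to admit a feasible parameter choice.

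The main obstacle will be the case analysis, which is driven by the coupling between the $i = 2$ and $i = 3$ blocks through the linking constraints for $(j,k) \in \{(1,1),(2,2)\}$. For instance, $(j,k)=(1,1)$ gives $s + B + C \ge b_1(e_1 + e_2 + e_3 - a_1)$, which in combination with $s + B \le b_1 b_2$ from $(j,k)=(1,2)$ constrains $s$ and $B + C$ simultaneously; $(j,k)=(2,2)$ similarly ties $s$ to $B + E$ through $s + B + E \ge b_2(e_2 + e_3 - a_2 + a_1)$. Reconciling these simultaneous couplings with the purely $i = 3$ bounds $C \le b_1 b_3$, $E \le b_2 b_3$, and $z_{333} \le b_3^2$, across every combination of tight and slack wideness inequalities, will be the technical heart of the argument. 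I expect the analysis to be somewhat case-heavy but essentially routine once the parametrisation above is set up, with explicit nonnegative integer values of $s, B, C, E$ written down in each case as affine combinations of the $a_i$ and $e_i$.
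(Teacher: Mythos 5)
Your setup is essentially the paper's: your parameter $s=z_{212}=z_{221}$ is just $a_1e_2-z_{211}$, and restricting to symmetric top-block entries ($z_{312}=z_{321}$) with free parameters $B=z_{312}$, $C=z_{313}$, $E=z_{323}$ is an equivalent reparametrisation of the paper's $u,v,y,w$ with $v=y$ imposed. The gap is that everything you defer as ``somewhat case-heavy but essentially routine'' is in fact the entire proof, and your one concrete commitment is wrong. Taking $s$ at the lower end of its feasible interval, $s=\max\bigl(0,b_1(e_1+e_2-a_1)\bigr)$, fails: for $Y=(5,4,3,3)$ (so $a_1=3$, $a_2=4$, $a_3=5$, $e_1=2$, $e_2=e_3=1$, $b_2=b_3=1$) your rule gives $s=0$, i.e.\ $z_{211}=3$, $z_{212}=z_{221}=0$, $z_{222}=1$, and this two-block allocation cannot be extended to row block $3$ at all: in your variables the linking constraints force $B+C\ge 3$ and $z_{311}=3-B-C\ge0$, hence $B+C=3$, while $B+E\ge1$, $E\le1$, and $z_{333}=1-C-E\ge0$ give $C\le1$ and $B\le1$, a contradiction (this is exactly the example in the remark at the end of Section~\ref{sec:three}, where the obstruction is read off from \eqref{ineq:u_less}, \eqref{ineq:w_less}, \eqref{ineq:ub_sums_of_2}, and \eqref{ineq:ub_sum_of_4}). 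The correct choice is the \emph{opposite} endpoint: one must minimise $z_{211}$ (equivalently maximise your $s$), as in \eqref{eq:value_of_x}, and this choice of endpoint is not a convenience but is load-bearing in the later estimates.

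Even with the right $s$, the remaining feasibility question is not routine. Wideness lets one check that every lower bound on the individual variables, on the pair sums $u+y$, $y+w$ (your $B+C$-type and $B+E$-type sums), and on the total $u+v+y+w$ is at most every corresponding upper bound, but pairwise nonemptiness of these intervals does not yield a simultaneous integer solution of the coupled system; indeed the paper remarks in its final section that for general $p$ this consistency is precisely what is still unknown. For $p=3$ the paper closes this gap with an extremal argument: fix a total $s\in\I_{u+v+y+w}$, split it as $s_1+s_2$ with $s_1\in\I_{u+y}$, $s_2\in\I_{y+w}$, choose $y,w$ realising $s_2$ so as to minimise the distance of $u=s_1-y$ from its interval $I_u$, and derive a contradiction in the two failure cases ($u<0$ and $u>\max(I_u)$) through a long chain of claims that repeatedly invokes the explicit value of $x$ and the wideness inequalities \eqref{wideness:3}--\eqref{wideness:6}. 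Your proposal contains no substitute for this step, so as it stands it is a plausible plan rather than a proof.
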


\begin{proof}
Let $Y$ be a wide Young diagram which consists of three row lengths $a_1$, $a_2$, and $a_3$ where $a_1 < a_2 < a_3$. Let $e_i$ be the number of rows of length $a_i$ for $i \in \{1, 2, 3\}$. 
The following inequalities, from \tref{thm:wideness_check}, are the wideness constraints for $Y$.
\begin{align}
    a_1&\ge e_1,\label{wideness:1}\\
    a_2&\ge e_1+e_2,\label{wideness:2}\\
    a_2e_2+a_1(a_1-e_2)&\ge a_1(e_1+e_2) \;\text{ 
    if   }\;e_2< a_1\le e_1+e_2,\label{wideness:3}\\
    a_3&\ge e_1+e_2+e_3,\label{wideness:4}
\end{align}
\begin{equation}\label{wideness:5}
    \begin{cases}
        a_3e_3+a_2(a_1-e_3)\ge a_1(e_1+e_2+e_3) & \text{if $e_3< a_1\le e_2+e_3$},\\
        a_3e_3+a_2e_2+a_1(a_1-e_2-e_3)\ge  a_1(e_1+e_2+e_3) & \text{if $e_2+e_3< a_1\le e_1+e_2+e_3$},
    \end{cases}
\end{equation}
and
\begin{equation}\label{wideness:6}
    \begin{cases}
        a_3e_3+a_2(a_2-e_3)\ge a_1e_1+a_2(e_2+e_3) & \text{if $e_3< a_2\le e_2+e_3$},\\
        a_3e_3+a_2e_2+a_1(a_2-e_2-e_3)\ge a_1e_1+a_2(e_2+e_3) & \text{if $e_2+e_3< a_2\le e_1+e_2+e_3$}.
    \end{cases}
\end{equation}

The wideness equations in \eref{wideness:3}, \eref{wideness:5}, and \eref{wideness:6} only hold when some condition is met. We note that for each of these inequalities, it is simple to verify that the inequality remains true if only the lower bound of the corresponding condition holds. For example, consider \eref{wideness:3}. If $a_1 > e_1+e_2$, then
\[
a_2e_2+a_1(a_1-e_2) \geq a_1e_2+a_1(a_1-e_2)=a_1^2 \geq a_1(e_1+e_2).
\]
So \eref{wideness:3} holds whenever $a_1 > e_2$, regardless of whether or not $a_1 \leq e_1+e_2$.
Thus, when we employ \eref{wideness:3}, \eref{wideness:5}, or \eref{wideness:6}, we will only check that the lower bound of the corresponding condition is met.

We need to show that there exist nonnegative integers $z_{ijk}$ for $1\le j,k\le i\le 3$ so that \eqref{e:sumk}, \eqref{e:sumj}, and \eqref{e:sumi} are satisfied. We define five variables $x=z_{211}$, $u=z_{311}$, $v=z_{312}$, $y=z_{321}$, and $w=z_{322}$, while the rest of the $z_{ijk}$ are determined from these variables by the identities  \eqref{e:sumk} and \eqref{e:sumj}. We will show that the values of $x$, $u$, $v$, $y$, and $w$ can be chosen so that the inequalities in \eqref{e:sumi} hold.

As a first step, we construct an allocation for the bottom two row blocks of $Y$. From \eqref{e:sumk}, we are forced to choose $z_{111} = a_1e_1$. By \eqref{e:sumk} and \eqref{e:sumj} with $p=2$, we have 
    \begin{equation}\label{eq:z212_z222}
        \begin{split}
            z_{212}&=z_{221}=a_1e_2-x,\\
            z_{222}&=b_2e_2-a_1e_2+x.
        \end{split}
    \end{equation}
    Since all $z_{ijk}$ are nonnegative, we require
    \begin{equation}\label{ineq:nonzero1}
       \max(0,e_2(a_1-b_2))\le x\le a_1e_2.
    \end{equation}
    
    There are four inequalities from \eref{e:sumi} with $p=2$ that must be satisfied:
    \begin{itemize}     
    \item  $j=k=1$: we need to satisfy $z_{111}+z_{211}=a_1e_1+x\le a_1^2$. So, we require
    \begin{equation}\label{ineq:x_less1}
        x\le a_1(a_1-e_1).
    \end{equation}

    \item  $j=1,k=2$: we need to satisfy $z_{212}=a_1e_2-x\le a_1b_2$. So, we require
    \begin{equation}\label{ineq:x_more}
        x\ge a_1(e_2-b_2).
    \end{equation}
    (Note that if $e_2\le b_2$, then \eqref{ineq:x_more} is redundant).
    \item  $j=2,k=1$: we obtain the same inequality as \eqref{ineq:x_more}.
    \item  $j=2,k=2$: we need to satisfy $z_{222} = b_2e_2-a_1e_2+x\le b_2^2$. So, we require
    \begin{equation}\label{ineq:x_less2}
        x\le a_1e_2-b_2(e_2-b_2).
    \end{equation}    
    \end{itemize}
    
    The constraints\eqref{ineq:nonzero1}--\eqref{ineq:x_less2} can be summarized in one expression:
    \begin{equation}\label{ineq:max_min}
        x\in I_x := [\max(0,e_2(a_1-b_2),a_1(e_2-b_2)), \min(a_1e_2,a_1(a_1-e_1),a_1e_2-b_2(e_2-b_2))].
    \end{equation}
    We claim that the interval $I_x$ is non-empty. For this, we need to show that every possible right endpoint of $I_x$ is at least as large as the left endpoint $\max(0,e_2(a_1-b_2),a_1(e_2-b_2))$. 
    Note that the value of the left endpoint of $I_x$ is equal to $0$, $e_2(a_1-b_2)$, or $a_1(e_2-b_2)$, when $\max(a_1 ,e_2 ,b_2)$ is equal to $b_2$, $a_1$, or $e_2$, respectively. 

    Clearly, $a_1e_2 \geq \max(0, e_2(a_1-b_2), a_1(e_2-b_2))$. Next, we verify that $a_1(a_1-e_1)$ and $a_1e_2-b_2(e_2-b_2)$ are both at least $\max(0, e_2(a_1-b_2), a_1(e_2-b_2))$. To do this, we consider the three possibilities for $\max(a_1 ,e_2 ,b_2)$ separately. 

    \begin{enumerate}
        \item $\max(0,e_2(a_1-b_2),a_1(e_2-b_2))=0$. By \eqref{wideness:1}, $a_1(a_1-e_1)\ge0$. Given that $\max(a_1, e_2, b_2) = b_2$, it follows that $a_1e_2-b_2(e_2-b_2)\ge0$.
        
        \item $\max(0,e_2(a_1-b_2),a_1(e_2-b_2))=e_2(a_1-b_2)$. In this case, $\max(a_1, e_2, b_2) = a_1$.
        The inequality $e_2(a_1-b_2)\le a_1(a_1-e_1)$ is equivalent to \eqref{wideness:3}. 
        The inequality $e_2(a_1-b_2)\le a_1e_2-b_2(e_2-b_2))$ is equivalent to $b_2^2 \geq 0$, which is clearly true.
        
        \item $\max(0,e_2(a_1-b_2),a_1(e_2-b_2))=a_1(e_2-b_2)$.  
        The inequality $a_1(e_2-b_2)\le a_1(a_1-e_1)$ is equivalent to $e_1+e_2\le a_2$, which is true by \eqref{wideness:2}.
        The inequality $a_1(e_2-b_2) \leq a_1e_2-b_2(e_2-b_2)$ is equivalent to $e_2 \leq a_2$, which is again true by \eqref{wideness:2}.
    \end{enumerate}
       
    Thus, $x$ can be chosen to satisfy the allocation requirements. To simplify the assignments in row block 3, we shall choose the minimal possible value for $x$, namely,
    \begin{equation}\label{eq:value_of_x}
		x = \begin{cases}
			0 & \text{if } \max (a_1 ,e_2 ,b_2) = b_2. \\
            e_2(a_1-b_2) & \text{if } \max (a_1 ,e_2 ,b_2) = a_1, \\
			a_1(e_2-b_2) & \text{if } \max (a_1 ,e_2 ,b_2) = e_2. \\
		\end{cases}
	\end{equation}

    We now extend our allocation to the top row block of $Y$. Recall that $u=z_{311}$, $v=z_{312}$, $y=z_{321}$, and $w=z_{322}$. So, by \eqref{e:sumk} and \eqref{e:sumj} we have 
    \begin{equation}\label{def:z333}
        \begin{split}
            z_{313}&=a_1e_3-u-v,\\
            z_{323}&=b_2e_3-y-w,\\
            z_{331}&=a_1e_3-u-y,\\
            z_{332}&=b_2e_3-v-w, \text{ and}\\
            z_{333}&=b_3e_3-b_2e_3-a_1e_3+u+v+y+w=(b_3-a_2)e_3+u+v+y+w.
        \end{split}
    \end{equation}
    Since we require all $z_{3jk}$ to be nonnegative, we obtain the following constraints:
    \begin{equation}\label{ineq:uvyw_nonneg}
        u,v,y,w\ge0,
    \end{equation}
    \begin{equation}\label{ineq:ub_sums_of_2}
    \begin{split}
            u+v&\le a_1e_3,\\
            y+w&\le b_2e_3,\\
            u+y&\le a_1e_3,\\
            v+w&\le b_2e_3,
    \end{split}
    \end{equation}
    and
    \begin{equation}\label{ineq:ub_sum_of_4}
        u+v+y+w\ge (a_2-b_3)e_3.
    \end{equation}

    The new constraints imposed by the nine inequalities in \eqref{e:sumi} with $p=3$ are:
    \begin{itemize}
        \item $j=1,k=1$: $z_{111}+z_{211}+z_{311}=a_1e_1+x+u\le a_1^2$. So, we require
        \begin{equation}\label{ineq:u_less}
            u\le (a_1-e_1)a_1-x.
        \end{equation}
        \item $j=1,k=2$: $z_{212}+z_{312}=a_1e_2-x+v\le a_1b_2$. So, we require
        \begin{equation}\label{ineq:v_less}
            v\le x-a_1(e_2-b_2).
        \end{equation}
        \item $j=1,k=3$: $z_{313}=a_1e_3-u-v\le a_1b_3$. So, we require
        \begin{equation}\label{ineq:u+v_more}
            u+v\ge a_1(e_3-b_3).
        \end{equation}
        \item $j=2,k=1$: $z_{221}+z_{321}=a_1e_2-x+y\le a_1b_2$. So, we require
        \begin{equation}\label{ineq:y_less}
            y\le x- a_1(e_2-b_2).
        \end{equation}
        \item $j=2,k=2$: $z_{222}+z_{322}=b_2e_2-a_1e_2+x+w\le b_2^2$. So, we require
        \begin{equation}\label{ineq:w_less}
            w\le a_1e_2-b_2(e_2-b_2)-x.
        \end{equation}
        \item $j=2,k=3$: $z_{323}=b_2e_3-y-w\le b_2b_3$. So, we require
        \begin{equation}\label{ineq:y+w_more}
            y+w\ge b_2(e_3-b_3).
        \end{equation}
        \item $j=3,k=1$: $z_{331}=a_1e_3-u-y\le a_1b_3$. So, we require
        \begin{equation}\label{ineq:u+y_more}
            u+y\ge a_1(e_3-b_3).
        \end{equation}
        \item $j=3,k=2$: $z_{332}=b_2e_3-v-w\le b_2b_3$. So, we require
        \begin{equation}\label{ineq:v+w_more}
            v+w\ge b_2(e_3-b_3).
        \end{equation}
        \item $j=3,k=3$: $z_{333}=(b_3-a_2)e_3+u+v+y+w\le b_3^2$. So, we require
        \begin{equation}\label{ineq:all_less}
            u+v+y+w\le b_3^2+(a_2-b_3)e_3.
        \end{equation}
    \end{itemize}
    We need to show that there are nonnegative integers $u$, $v$, $y$, $w$ that satisfy all constraints \eref{ineq:uvyw_nonneg}--\eref{ineq:all_less}.
    
    The constraints define lower and upper bounds for the values of $u$, $v$, $y$, and $w$ and for the sums $u+v$, $u+y$, $y+w$, $v+w$, and $u+v+y+w$. Our first goal is to show that the intervals defined by these lower and upper bounds are nonempty. 

    From  \eqref{ineq:uvyw_nonneg}, \eqref{ineq:u_less},\eqref{ineq:v_less},\eqref{ineq:y_less} and \eqref{ineq:w_less}, we have the requirements
    \begin{equation}\label{eq:uvyw_in_intervals}
        \begin{split}
            u&\in I_u:=[0,a_1(a_1-e_1)-x],\\
            v,y&\in I_v=I_y:=[0,x - a_1(e_2-b_2)], \text{ and} \\
            w&\in I_w:=[0,a_1e_2-b_2(e_2-b_2)-x].
        \end{split}
    \end{equation}
    By \eqref{ineq:x_less1}, \eqref{ineq:x_more}, and \eqref{ineq:x_less2} we have:
    \begin{claim}\label{claim:interval_nonempty_singles}
        The intervals $I_u,I_v,I_y,$ and $I_w$ are not empty.
    \end{claim}

    Let $I_{u+v}=I_{u+y}=[a_1(e_3-b_3),a_1e_3]$ and $I_{y+w}=I_{v+w}=[b_2(e_3-b_3),b_2e_3]$. By \eqref{ineq:ub_sums_of_2}, \eqref{ineq:u+v_more}, \eqref{ineq:y+w_more}, \eqref{ineq:u+y_more}, and \eqref{ineq:v+w_more}, we require
\begin{equation}\label{eq:pairs_intervals}
%    \begin{split}
        u+v\in I_{u+v},\  %\\
        u+y\in I_{u+y}, \ %\\
        v+w\in I_{v+w} \text{ and }%\\
        y+w\in I_{y+w}.
%    \end{split}
   \end{equation}

\begin{notation}
        For two intervals $I=[a,b]$ and $J=[c,d]$, let $I\oplus J:=[a+c,b+d]$.
    \end{notation}

    Let $\I_{u+v} = I_{u+v}\cap (I_{u}\oplus I_{v})$, $\I_{u+y} = I_{u+y}\cap (I_{u}\oplus I_{y})$, $\I_{v+w} = I_{v+w}\cap (I_{v}\oplus I_{w})$, and $\I_{y+w} = I_{y+w}\cap (I_{y}\oplus I_{w})$.
    By \eqref{eq:uvyw_in_intervals} and \eqref{eq:pairs_intervals}, we require  \begin{equation}\label{eq:pairs_intersection_intervals}
        u+v\in \I_{u+v},\
        u+y\in \I_{u+y},\
        v+w\in \I_{v+w}  \text{ and }
        y+w\in \I_{y+w}.
\end{equation}
    Note that $\I_{u+v}=\I_{u+y}$ and $\I_{v+w}=\I_{y+w}$. So, from now on, we shall only use $\I_{u+y}$ and $\I_{y+w}$.
    
    \begin{claim}\label{claim:interval_nonempty_pairs}
        The intervals $\I_{u+y}$ and $\I_{y+w}$ are not empty.        
    \end{claim}
    \begin{proof} 
        By definition of $I_{u+y}$ and $I_{u}\oplus I_{y}$,    \begin{equation}\label{ineq:u+v_interval}
            \I_{u+y}=[\max(0,a_1(e_3-b_3)),\min(a_1e_3,a_1(a_2-e_1-e_2))].
        \end{equation}
        To show that this is indeed a non-empty interval, we show that each possible left endpoint is at most each possible right endpoint. The required inequalities involving $a_1e_3$ are obviously true. In addition, $a_1(a_2-e_1-e_2)\ge0$ by \eqref{wideness:2} and $a_1(e_3-b_3)\le a_1(a_2-e_1-e_2)$ by \eqref{wideness:4}.

        Next we prove the claim regarding $\I_{y+w}$. From the definitions of $I_{y+w}$ and $I_y \oplus I_w$,
\begin{equation}\label{ineq:y+w_interval}
        \I_{y+w}=[\max(0,b_2(e_3-b_3)),\min(b_2e_3,b_2(a_2-e_2))].
        \end{equation}
        The fact that $b_2e_3$ is at least as large as both possible left endpoints is obvious. Furthermore, $b_2(a_2-e_2)>0$ by \eqref{wideness:2} and $b_2(e_3-b_3)<b_2(a_2-e_2)$ by \eqref{wideness:4}.
    \end{proof}

    Define $J_1:=\I_{u+y}\oplus\I_{y+w}$. That is,
    \begin{equation}
        \begin{split}
            J_1 &= [\max(0,a_1(e_3-b_3))+\max(0,b_2(e_3-b_3)),\min(a_1e_3,a_1(a_2-e_1-e_2))+\min(b_2e_3,b_2(a_2-e_2))]\\
            &=[\max(0,a_2(e_3-b_3)),a_1\min(e_3,a_2-e_1-e_2)+b_2\min(e_3,a_2-e_2)].
        \end{split}            
    \end{equation} 

    Let $J_2$ be the interval whose endpoints are the lower and upper bounds for $u+v+y+w$ given in \eqref{ineq:ub_sum_of_4} and \eqref{ineq:all_less}. That is,
    \begin{equation}\label{def:J2}
        J_2=[(a_2-b_3)e_3,(a_2-b_3)e_3+b_3^2].
    \end{equation}

    Let $\I_{u+v+y+w}=J_1\cap J_2$. The constraints for $u$, $v$, $y$, and $w$ require 
    \begin{equation}\label{eq:uvyw_interval}
        u+v+y+w\in \I_{u+v+y+w}.
    \end{equation}
    
    \begin{claim}\label{claim:interval_nonempty_all4}
        The set $\I_{u+v+y+w}$ is a nonempty interval.        
    \end{claim}
    \begin{proof}
Let 
\begin{equation}\label{ineq:all_combined_lower}
\ell = \max\big(0,a_2(e_3-b_3),(a_2-b_3)e_3\big) = \max\big(0,a_2e_3-b_3\min(a_2,e_3)\big)
\end{equation}
and 
\[
m=\min\Big(\min(a_1e_3,a_1(a_2-e_1-e_2))+\min(b_2e_3,b_2(a_2-e_2)),\ (a_2-b_3)e_3+b_3^2\Big).
\]
To prove the claim, it suffices to prove that $\ell \leq m$.

Let $\alpha = a_1e_3$, $\beta=a_1(a_2-e_1-e_2)$, $\gamma=b_2e_3$, and $\delta = b_2(a_2-e_2)$. So,  \begin{equation}\label{ineq:all_combined_upper_simple}
 m=\min\Big(\min(\alpha,\beta)+\min(\gamma,\delta),\ a_2e_3- b_3(e_3-b_3)\Big).
\end{equation}
        
Since $\min(\alpha,\beta)=\alpha$ if and only if $a_2\ge e_1+e_2+e_3$ and $\min(\gamma,\delta)=\gamma$ if and only if $a_2\ge e_2+e_3$,  $\min(\alpha,\beta)=\alpha$ implies $\min(\gamma,\delta)=\gamma$. Thus, we can consider the following three cases:

\bigskip\noindent{\bf Case (i):}
$\min(\alpha,\beta)=\alpha$ and $\min(\gamma,\delta)=\gamma$.

 As mentioned above, $a_2\ge e_1+e_2+e_3$, so $\min(a_2,e_3)=e_3$. Combining \eqref{ineq:all_combined_lower} and \eqref{ineq:all_combined_upper_simple}, the requirement for $\ell\le m$ is          
 \begin{equation*}
    \max(0,(a_2-b_3)e_3)\le \min(a_2e_3,\ a_2e_3-b_3(e_3-b_3)).
 \end{equation*}
It is obvious that $\min(a_2e_3,a_2e_3-b_3(e_3-b_3)) \geq (a_2-b_3)e_3$, so it 
suffices to show that $a_2e_3\ge b_3(e_3-b_3)$ when $e_3\ge b_3$. However, this is immediate from $a_2\ge e_3$.

\bigskip\noindent{\bf Case (ii):}
$\min(\alpha,\beta)=\beta$ and $\min(\gamma,\delta)=\gamma$.
            
In this case, $a_2\ge e_2+e_3$ so, again, $\min(a_2,e_3)=e_3$. Combining \eqref{ineq:all_combined_lower} and \eqref{ineq:all_combined_upper_simple}, the requirement for $\ell\le m$ is
\begin{equation*}
\max(0,(a_2-b_3)e_3)\le\min\Big(a_1(a_2-e_1-e_2)+b_2e_3,\ a_2e_3-b_3(e_3-b_3)\Big).
\end{equation*} 
 We first verify that $\min(a_1(a_2-e_1-e_2)+b_2e_3,a_2e_3-b_3(e_3-b_3)) \geq 0$. By \eqref{wideness:2}, $a_1(a_2-e_1-e_2)+b_2e_3 \geq 0$. 
 Also, $a_2e_3\ge b_3(e_3-b_3)$ by the same argument as in Case (i).
 Next, we note that $(a_2-b_3)e_3\le a_1(a_2-e_1-e_2)+b_2e_3$ by \eqref{wideness:6}.
 Finally, the fact that $(a_2-b_3)e_3< a_2e_3-b_3(e_3-b_3)$ is obvious. 

\bigskip\noindent{\bf Case (iii):}
$\min(\alpha,\beta)=\beta$ and $\min(\gamma,\delta)=\delta$.
            
 Here, $a_2<e_2+e_3$ and we need to satisfy
\begin{equation}\label{ineq:all_interval_III}
 \max(0,a_2e_3-b_3\min(a_2,e_3))\le\min\Big(a_1(a_2-e_1-e_2)+b_2(a_2-e_2),\ a_2e_3-b_3(e_3-b_3)\Big).
\end{equation}
First, consider when $a_2e_3<b_3\min(a_2,e_3)$. By \eref{wideness:2}, $a_1(a_2-e_1-e_2)+b_2(a_2-e_2) \geq 0$. 
%If $e_3 \leq b_3$, then $a_2e_3-b_3(e_3-b_3) \geq 0$, so assume that $e_3> b_3$. If $\min(a_2,e_3)=e_3$, then $a_2e_3-b_3(e_3-b_3)\ge0$ as proved in Case (i).
If $e_3\le a_2$ then
$a_2e_3\ge b_3(e_3-b_3)$
for the same reason as in Case (i). On the other hand, if $a_2<e_3$, then from $a_2e_3<b_3\min(a_2,e_3)$, it follows that $e_3 < b_3$, and $a_2e_3\ge0>b_3(e_3-b_3)$.
            
Now, assume that $a_2e_3\ge b_3\min(a_2,e_3)$. To verify \eref{ineq:all_interval_III}, we have four subcases to consider, depending on the values of $\min(a_2, e_3)$ and $\min(a_1(a_2-e_1-e_2)+b_2(a_2-e_2),a_2e_3-b_3(e_3-b_3))$.

First, assume that $a_1(a_2-e_1-e_2)+b_2(a_2-e_2)\le a_2e_3-b_3(e_3-b_3)$.
\begin{itemize}
\item Suppose that $a_2> e_3$. Then we need to prove that $a_2e_3-b_3e_3\le a_1(a_2-e_1-e_2)+b_2(a_2-e_2)$. By \eqref{wideness:6},
    $b_3e_3\ge a_1e_1+a_2(e_2+e_3)-a_2^2$. Thus,
     \[
     a_2e_3-b_3e_3 \leq a_2e_3-(a_1e_1+a_2(e_2+e_3)-a_2^2)= a_1(a_2-e_1-e_2)+b_2(a_2-e_2).
    \]
                
 \item Suppose that $a_2\le e_3$. We need to prove that $a_2e_3-a_2b_3\le a_1(a_2-e_1-e_2)+b_2(a_2-e_2)$, which is equivalent to 
$a_2(e_2+e_3)\le a_2a_3-a_1e_1$. By \eqref{wideness:4} and the fact that $a_1 \leq a_2$, we have
\[
a_2a_3-a_1e_1 \geq a_2(a_3-e_1) \geq a_2(e_2+e_3).
\]
\end{itemize}
Finally, assume that $a_1(a_2-e_1-e_2)+b_2(a_2-e_2)>a_2e_3-b_3(e_3-b_3)$.
\begin{itemize}
\item Suppose that $a_2>e_3$. The required inequality in this case is $a_2e_3-b_3e_3\le a_2e_3-b_3(e_3-b_3)$, which is easily seen to be true. 
 \item Suppose that $a_2\le e_3$. We need to prove $a_2e_3-a_2b_3\le a_2e_3-b_3(e_3-b_3)$. This follows easily using the fact that $e_3<a_3$, by \eqref{wideness:4}. \qedhere
\end{itemize}
        
\end{proof}

\begin{observation}\label{obs:equiv_constraints}
 The set of constraints \eqref{ineq:uvyw_nonneg}--\eqref{ineq:all_less} is equivalent to the requirements \eqref{eq:uvyw_in_intervals}, \eqref{eq:pairs_intersection_intervals}, and \eqref{eq:uvyw_interval}.
\end{observation}

Since the constraints \eqref{eq:uvyw_in_intervals}, \eqref{eq:pairs_intersection_intervals}, and \eqref{eq:uvyw_interval} are symmetric in $v$ and $y$, it makes sense to look for an allocation where $y=v$. From now on, we will impose this extra condition.

\begin{notation}
    For an interval $I$, we write $\min(I)$ for the left endpoint of $I$ and $\max(I)$ for the right endpoint of $I$.
\end{notation}

%Thus, to complete the proof, we shall show that there exist $u,v,y$, and $w$ satisfying the above constraints, along with the additional requirement $y=v$. 

Given $s\in \I_{u+v+y+w}$, we can find $s_1\in \I_{u+y}$ and $s_2\in \I_{y+w}$ such that $s_1+s_2 = s$. We want to show that we can choose $s,s_1$, and $s_2$ so that there are numbers $u$, $y$, and $w$ satisfying \eqref{eq:uvyw_in_intervals} such that $s_1=u+y$ and $s_2=y+w$.
Given $s_2\in \I_{y+w}$, we know that $s_2\in I_y\oplus I_w$ (since $\I_{y+w}\subseteq I_y\oplus I_w$). So, there exist $y\in I_y$ and $w\in I_w$ such that $y+w=s_2$. Set $u=s_1-y$. If $u\in I_u$ for any such choice of $y$ and $w$, then we are done. So, we will assume that $u\not\in I_u$ for any valid choice of $y$ and $w$. 
From now on, we will assume that we have chosen $w$ and $y$ optimally, so that the distance $\max(0, \min(I_u)-u)+\max(0, u-\max(I_u))$ from $u$ to $I_u$ is minimised. We will then reach a contradiction, implying that there is a choice of $y$ and $w$ such that $u \in I_u$. Since $u\not\in I_u$, there are two cases to consider. 

\bigskip\noindent{\large{\bf Case (i):\ } $u<0$. }

\begin{proof}
    In this case, $y=s_1-u>s_1$. The proof proceeds via a sequence of claims.

\begin{claim}\label{claim:w=max}
    $w=\max(I_w)$.
\end{claim}
\begin{proof}
    Suppose, for a contradiction, that $w<\max(I_w)$. Then we can decrease $y$ by 1 (which we can do, since $y>s_1 \geq 0 = \min(I_y)$) and increase $u$ and $w$ by 1, which preserves the values of $s$, $s_1$, and $s_2$. This means that our initial choice of $y$ and $w$ did not minimise the distance from $u$ to $I_u$, which is a contradiction.
\end{proof}

\begin{claim}\label{claim:J1_no_singleton_1}
    If $s_1<\max(\I_{u+y})$ and $s_2>\min(\I_{y+w})$, then $|\I_{u+v+y+w}|>1$. 
\end{claim}
\begin{proof}
    Suppose that $s_1<\max(\I_{u+y})$ and $s_2>\min(\I_{y+w})$. Since $s_1$ is not the right endpoint of $\I_{u+y}$ and $s_2$ is not the left endpoint of $\I_{y+w}$, $s=s_1+s_2$ is not an endpoint of $J_1=\I_{u+y}\oplus\I_{y+w}$. So, $|J_1|>1$. By \eqref{def:J2}, we also have $|J_2|>1$. By definition, $s\in J_1\cap J_2$. If $|J_1\cap J_2|=1$, then $s$ must be an endpoint of both $J_1$ and $J_2$. But we know that $s$ is not an endpoint of $J_1$. Thus, $|J_1\cap J_2|=|\I_{u+v+y+w}|>1$.
\end{proof}

\begin{claim}\label{claim:s1=max_or_s2=min}
    $s_1=\max(\I_{u+y})$ or $s_2=\min(\I_{y+w})$.
\end{claim}
\begin{proof}
    Suppose, for a contradiction, that $s_1<\max(\I_{u+y})$ and $s_2>\min(\I_{y+w})$. By Claim~\ref{claim:J1_no_singleton_1}, $s>\min(\I_{u+v+y+w})$ or $s<\max(\I_{u+v+y+w})$ (or both). 
    If $s>\min(\I_{u+v+y+w})$, then we can decrease $y$ and increase $u$. This will decrease $s$ and $s_2$, which is allowed, and will decrease the distance from $u$ to $I_u$, which is a contradiction. Similarly, if $s<\max(\I_{u+v+y+w})$, then we can increase $u$, another contradiction. 
\end{proof}

\begin{claim}\label{claim:a2>e1+e2+e3}
    If $s_1=\max(\I_{u+y})$, then $a_2\ge e_1+e_2+e_3$.
\end{claim}
\begin{proof}
    Suppose that $s_1=\max(\I_{u+y})$. Then $\max(\I_{u+y}) = s_1 <y\leq\max(\I_{y})$. 
    Applying \eqref{eq:uvyw_in_intervals} and \eqref{ineq:u+v_interval}, this gives 
    $$\min(a_1e_3,a_1(a_2-e_1-e_2))<x - a_1(e_2-b_2).$$
    If $a_2<e_1+e_2+e_3$, then $\min(a_1e_3,a_1(a_2-e_1-e_2))=a_1(a_2-e_1-e_2)$, and
    $$a_1(a_2-e_1-e_2)<x - a_1(e_2-b_2)\le a_1(a_1-e_1) - a_1(e_2-b_2),$$ 
    by \eqref{ineq:max_min}.This implies that
    $$a_2-e_1-e_2<a_1-e_1-e_2+b_2=a_2-e_1-e_2,$$
    a contradiction. Thus, $a_2\ge e_1+e_2+e_3$.
\end{proof}

\begin{claim}\label{claim:s1=max}
    $s_1<\max(\I_{u+y})$.
\end{claim}
\begin{proof}
    Suppose, for a contradiction, that $s_1=\max(\I_{u+y})$. We show that this implies that $s_2>\max(\I_{y+w})$, contradicting the choice of $s_2$.
    By \clref{claim:a2>e1+e2+e3}, $y>s_1=\max(\I_{u+y})=\min(a_1e_3,a_1(a_2-e_1-e_2))=a_1e_3$. So, by Claim~\ref{claim:w=max}, 
    \begin{equation}\label{e:s2ub}
    s_2=y+w>a_1e_3+\max(I_w)=a_1e_3+a_1e_2-b_2(e_2-b_2)-x.
    \end{equation}
    On the other hand, by \clref{claim:a2>e1+e2+e3}, 
    \begin{equation}\label{e:s2lb}
    s_2 \leq \max(\I_{y+w})=\min(b_2e_3,b_2(a_2-e_2))=b_2e_3. 
    \end{equation}
    So, to reach our contradiction we have to show that
    \begin{equation*}
        a_1e_3+a_1e_2-b_2(e_2-b_2)-x\ge b_2e_3.
    \end{equation*}
    This inequality is equivalent to
    \begin{equation}\label{exp:s2>maxIv+w}
        x\le (a_1-b_2)(e_2+e_3)+b_2^2.
    \end{equation}

    First consider when $a_1\le b_2$. By \eqref{ineq:max_min}, it suffices to prove the inequality
\begin{equation}\label{e:stronger}a_1(a_1-e_1)\le (a_1-b_2)(e_2+e_3)+b_2^2,\end{equation}
    which is equivalent to $a_1(a_2-e_2-e_3)-a_1e_1\le b_2(a_2-e_2-e_3)$. This holds, since $a_1\le b_2$.

    Now consider when $a_1>b_2$. By \eqref{ineq:max_min}, it suffices to prove the inequality
    \[a_1e_2-b_2(e_2-b_2)\le (a_1-b_2)(e_2+e_3)+b_2^2,\]
    which is equivalent to $b_2e_3\le a_1e_3$. This holds, since $a_1 > b_2$.
\end{proof}

\begin{claim}\label{claim:s2=min}
    $s_2>\min(\I_{y+w})$.
\end{claim}
\begin{proof}
    We will show that if $s_2=\min(\I_{y+w})$, then $s_1<\min(\I_{u+y})$, which contradicts the definition of $s_1$.
    By Claim~\ref{claim:w=max},
    $$s_1<y=s_2-w=\min(\I_{y+w})-\max(I_w)=\max(0,b_2(e_3-b_3))-(a_1e_2-b_2(e_2-b_2)-x).$$ 
    So, to reach our contradiction it will suffice to prove
\begin{equation}\label{ineq:max-max<max}
        \max(0,b_2(e_3-b_3))-a_1e_2+b_2(e_2-b_2)+x\le \max(0,a_1(e_3-b_3)).
    \end{equation}
    If $e_3\le b_3$, then \eqref{ineq:max-max<max} is equivalent to $x\le a_1e_2-b_2(e_2-b_2)$, which is true, by \eqref{ineq:x_less2}.
    If $e_3>b_3$, then \eqref{ineq:max-max<max} becomes $b_2(e_3-b_3)-a_1e_2+b_2(e_2-b_2)+x\le a_1(e_3-b_3)$, which is equivalent to 
    \begin{equation}\label{ineq:ub_for_x}
        x\le (a_1-b_2)(e_2+e_3-b_3)+b_2^2.
    \end{equation}
    To prove \eref{ineq:ub_for_x}, we consider two cases, depending on whether or not $a_1 \geq b_2$. 

    First, suppose that $a_1\ge b_2$. By \eqref{ineq:max_min}, it suffices to prove the inequality $$a_1e_2-b_2(e_2-b_2)\le (a_1-b_2)(e_2+e_3-b_3)+b_2^2,$$ which is equivalent to $0\le(a_1-b_2)(e_3-b_3)$. This is true, since $e_3>b_3$ and $a_1 \geq b_2$.
    
    Finally, consider when $a_1< b_2$. By \eqref{ineq:max_min}, it suffices to note that the inequality $$a_1(a_1-e_1)\le (a_1-b_2)(e_2+e_3-b_3)+b_2^2,$$ is implied by \eref{e:stronger}. 
\end{proof}

Claims \ref{claim:s1=max} and \ref{claim:s2=min}  contradict \clref{claim:s1=max_or_s2=min}. Thus, we have reached our desired contradiction in Case (i). 
\end{proof}

%\vspace{0.5cm}

\bigskip\noindent{\large{\bf  Case (ii):\ } $u>\max(I_u)$. }

\begin{proof}
The proof again proceeds via a sequence of claims.

\begin{claim}\label{claim:y<max}
    $y<\max(I_y)$.
\end{claim}
\begin{proof}
    If $y=\max(I_y)$, then $s_1=u+y>\max(I_u)+\max(I_y)$. So, $s_1\not\in I_u\oplus I_y$, contrary to the choice of $s_1$.
\end{proof}
\begin{claim}\label{claim:u=maxIu}
    $w=0$.
\end{claim}
\begin{proof}
    If $w>0$, then, by Claim~\ref{claim:y<max} we can increase $y$ and decrease $w$ and $u$. This decreases the distance from $u$ to $I_u$, a contradiction.
\end{proof}

\begin{claim}\label{claim:J1_no_singleton_2}
    If $s_1>\min(\I_{u+y})$ and $s_2<\max(\I_{y+w})$, then $|\I_{u+v+y+w}|>1$. 
\end{claim}
\begin{proof}
    The proof is the same as that of Claim~\ref{claim:J1_no_singleton_1}.
\end{proof}

\begin{claim}\label{claim:s1=min_or_s2=max}
    $s_1=\min(\I_{u+y})$ or $s_2=\max(\I_{y+w})$. 
\end{claim}
\begin{proof}
    Suppose, for a contradiction, that $s_1>\min(\I_{u+y})$ and $s_2<\max(\I_{y+w})$. By Claim~\ref{claim:J1_no_singleton_2}, $s>\min(\I_{u+v+y+w})$ or $s<\max(\I_{u+v+y+w})$ (or both). 
    If $s>\min(\I_{u+v+y+w})$, then we can decrease $u$ (and thus also $s$ and $s_1$), which is a contradiction. 
    If $s<\max(\I_{u+v+y+w})$, then we can increase $y$ and decrease $u$, another contradiction. 
\end{proof}

\begin{claim}\label{claim:if_s2=max:1}
    If $s_2=\max(\I_{y+w})$, then $a_2\ge e_2+e_3$.
\end{claim}
\begin{proof}
    We have $\max(\I_{y+w})=s_2=w+y=y< \max(I_y)$, by Claim~\ref{claim:y<max} and Claim~\ref{claim:u=maxIu}.
    So, $\min(b_2e_3,b_2(a_2-e_2))< x-a_1(e_2-b_2)$. 
    Suppose, for a contradiction, that $a_2< e_2+e_3$. Then $\min(b_2e_3,b_2(a_2-e_2))=b_2(a_2-e_2)$.
    So, by \eqref{ineq:max_min}, $b_2(a_2-e_2)<a_1e_2-b_2(e_2-b_2)-a_1(e_2-b_2) = b_2(a_2-e_2)$, a contradiction. 
    %By canceling equal terms, we obtain $a_2b_2<a_2b_2$ - a contradiction.
\end{proof}

When $b_2 \geq a_1$, we can easily arrive at a contradiction to Claim~\ref{claim:s1=min_or_s2=max}.

\begin{claim}\label{claim:if_s1=min:1}
    If $b_2\ge a_1$, then $s_1>\min(\I_{u+y})$.
\end{claim}
\begin{proof}
    Suppose that $s_1=\min(\I_{u+y})$. We will prove that $s_2<\min(\I_{y+w})$, a contradiction. Since $s_1=u+y\ge u > 0$ and $s_1=\min(\I_{u+y})$, it follows that $\min(\I_{u+y})=a_1(e_3-b_3)>0$. So, $e_3>b_3$ and thus $\min(\I_{y+w})=b_2(e_3-b_3)$.
    Thus, by Claim~\ref{claim:u=maxIu}, $$s_2=y+w=y=s_1-u<s_1=\min(\I_{u+y})=a_1(e_3-b_3)\le b_2(e_3-b_3)=\min(\I_{y+w}),$$
    our desired contradiction.
\end{proof}

\begin{claim}\label{claim:if_s2=max:2}
    If $b_2\ge a_1$, then $s_2<\max(\I_{y+w})$.
\end{claim}
\begin{proof}
Suppose that $s_2=\max(\I_{y+w})$. We will prove that $s_1>\max(\I_{u+y})$, a contradiction. 
     First note that by Claim~\ref{claim:if_s2=max:1}, $\max(\I_{y+w})=b_2e_3$. Thus, by Claim~\ref{claim:u=maxIu}, $$s_1=y+u>y=s_2=\max(\I_{y+w})=b_2e_3\ge a_1e_3\ge \max(\I_{u+y}),$$
     our desired contradiction.
\end{proof}
So, if $a_1 \leq b_2$, then Claims~\ref{claim:if_s1=min:1}~and~\ref{claim:if_s2=max:2} contradict Claim~\ref{claim:s1=min_or_s2=max}. 
Henceforth, we may assume that
$a_1>b_2$. 

\medskip

We next introduce some new notation. For $i, j, k=1,2,3$, define
\[
x_{ijk} = b_jb_k-\sum_{\ell = \max(j, k)}^{i} z_{ijk}.
\]
Then $x_{ijk}$ represents the difference between the total number $b_jb_k$ of symbols from symbol block $k$ that could appear in column block $j$ and the number of such symbols that have been allocated to column block $j$.
For $j=1,2,3$, define
\begin{equation}\label{e:rhoj}
    \rho_j = x_{3j1}+x_{3j2}+x_{3j3}.
\end{equation}
So $\rho_j$ represents the difference between the number of symbols that could have been allocated to column block $j$ and the symbols that were allocated to column block $j$. Thus, we can determine the following explicit values for each $\rho_j$:
\begin{align}
    \rho_1&=a_1(a_3-e_1-e_2-e_3),\label{eq:rho_1}\\
    \rho_2&=b_2(a_3-e_2-e_3),\label{eq:rho_2}\\
    \rho_3&=b_3(a_3-e_3).\label{eq:rho_3}
\end{align}
It will be convenient for us to have an explicit list of the values of $x_{ijk}$ whenever $i = 2, 3$ and $j, k = 1, 2, 3$. 
%\begin{equation}
%\begin{split}
\begin{align}\label{xijk_values}
    %\begin{aligned}
        x_{211} &= a_1(a_1-e_1)-z_{211}= a_1(a_1-e_1)-x,\nonumber\\
        x_{212} &= x_{221} = a_1b_2-z_{212}= a_1b_2-(a_1e_2-x),\nonumber\\
        x_{222} &= b_2^2-z_{222}=b_2^2-(b_2e_2-a_1e_2+x),\nonumber\\
        x_{213} &= x_{231} = a_1b_3,\nonumber\\
        x_{223} &= x_{232} = b_2b_3,\nonumber\\
        x_{233} &= b_3^2,\nonumber\\
        x_{311}&=x_{211}-z_{311}=a_1(a_1-e_1)-x-u,\nonumber\\
        x_{312}&=x_{212}-z_{312}=a_1b_2-(a_1e_2-x)-y=a_1(b_2-e_2)+x-y,\\
        x_{313}&=x_{213}-z_{313}=a_1b_3-(a_1e_3-u-y)=a_1(b_3-e_3)+u+y,\nonumber\\
        x_{321}&=x_{221}-z_{321}=a_1b_2-(a_1e_2-x)-y=a_1(b_2-e_2)+x-y=x_{312},\nonumber\\
        x_{322}&=x_{222}-z_{322}=b_2^2-(b_2e_2-a_1e_2+x)-w=b_2(b_2-e_2)+a_1e_2-x-w,\nonumber\\
        x_{323}&=x_{223}-z_{323}=b_2b_3-(b_2e_3-y-w)=b_2(b_3-e_3)+y+w,\nonumber\\
        x_{331}&=x_{231}-z_{331}=a_1b_3-(a_1e_3-u-y)=a_1(b_3-e_3)+u+y=x_{313},\nonumber\\
        x_{332}&=x_{232}-z_{332}=b_2b_3-(b_2e_3-y-w)=b_2(b_3-e_3)+y+w=x_{323},\nonumber\\
        x_{333}&=x_{233}-z_{333}=b_3^2-((b_3-a_2)e_3+u+y+y+w)=b_3^2+(a_2-b_3)e_3-(u+y+y+w).\nonumber
    %\end{aligned}
\end{align}

\begin{claim}\label{claim:s_1=min}
    If $s_1=\min(\I_{u+y})$, then $s_1=a_1(e_3-b_3)$,  $x_{311}<0$, and $x_{313}=x_{331}=0$.
\end{claim}
\begin{proof}
Suppose that $s_1=\min(\I_{u+y})$. By \eqref{ineq:u+v_interval}, $s_1 = \max(0, a_1(e_3-b_3))$. Since $s_1 = u+y > \max(I_u)+y \geq 0$, it follows that $s_1 = a_1(e_3-b_3)$.
From \eqref{xijk_values} and \eqref{eq:uvyw_in_intervals}, $x_{311}=a_1(a_1-e_1)-x-u=\max(I_u)-u<0$, since $u>\max(I_u)$. Also
from \eqref{xijk_values}, $x_{313}=x_{331}=u+y-a_1(e_3-b_3)=s_1-s_1=0$. 
\end{proof}

\begin{claim}\label{claim:s_2=max}
    If $s_2=\max(\I_{y+w})$, then $s_2=b_2e_3$ and $x_{323}=x_{223}=b_2b_3$.
\end{claim}
\begin{proof}
Suppose that $s_2=\max(\I_{y+w})$. By \eqref{ineq:y+w_interval} and Claim~\ref{claim:if_s2=max:1}, $y+w=s_2=b_2e_3$. So  $z_{323}=0$, by \eqref{def:z333}. Hence, $x_{323}=x_{223}=b_2b_3$, by \eqref{xijk_values}.
\end{proof}

By Claim~\ref{claim:s1=min_or_s2=max}, it suffices to consider the following three subcases: 

\bigskip
\noindent{\bf Subcase (i):\ } $a_1>b_2$ and $s_1=\min(\I_{u+y})$ and $s_2=\max(\I_{y+w})$.

%\begin{proof}
    By \eqref{e:rhoj} and Claim~\ref{claim:s_1=min},
    \begin{equation}\label{eq:rho1<x312}
        \rho_1  <x_{312}.
    \end{equation}
    Now, $z_{322}=w=0$, by Claim~\ref{claim:u=maxIu}, so \eqref{eq:z212_z222} and \eqref{xijk_values} imply that \begin{equation}\label{eq:x322_x222}
        x_{322}=x_{222}=b_2^2+e_2(a_1-b_2)-x.
    \end{equation}
    By \eqref{e:rhoj},  \eqref{eq:rho1<x312}, \eqref{eq:x322_x222}, and Claim~\ref{claim:s_2=max}, $\rho_2=x_{321}+x_{322}+x_{323}>\rho_1+x_{222}+x_{223}$. Thus,
    \begin{equation}\label{eq:rho2-rho1}
        \rho_2-\rho_1>x_{222}+x_{223}.
    \end{equation}
    By \eref{eq:rho2-rho1}, \eqref{eq:x322_x222}, \eqref{eq:rho_1},\eqref{eq:rho_2} and Claim~\ref{claim:s_2=max},
    \begin{equation}\label{ineq:rho2-rho1}
        b_2(a_3-e_2-e_3)-a_1(a_3-e_1-e_2-e_3)>b_2^2+e_2(a_1-b_2)-x+b_2b_3.
    \end{equation}
    We will show that the inequality \eqref{ineq:rho2-rho1} is false. We will consider the different possible values of $x$ separately. 
    Since $a_1>b_2$, we know that $\max(a_1, e_2, b_2) \neq b_2$. 
    Suppose that $\max(a_1, e_2, b_2) = a_1$ so that $x=e_2(a_1-b_2)$. Then \eqref{ineq:rho2-rho1} becomes
    \begin{equation*}
        b_2(a_3-e_2-e_3)-a_1(a_3-e_1-e_2-e_3)>b_2^2+b_2b_3,
    \end{equation*}
    which is equivalent to     \begin{equation}\label{ineq:lhs>0}
        b_2(a_1-e_2-e_3)>a_1(a_3-e_1-e_2-e_3).
    \end{equation}
    Since the right-hand side of \eqref{ineq:lhs>0} is nonnegative, by \eqref{wideness:4}, it follows that $a_1>e_2+e_3$. So, we can use \eqref{wideness:5} to obtain
    \begin{equation}
        b_2(a_1-e_2-e_3)>a_1a_3-a_3e_3-a_2e_2-a_1(a_1-e_2-e_3).
    \end{equation}
    This is equivalent to the inequality $a_2(a_1-e_3)>a_3(a_1-e_3)$, which is false, since $a_2 < a_3$ and $a_1>e_2+e_3>e_3$.

    Finally, consider when $\max(a_1, e_2, b_2) = e_2$ so that $x=a_1(e_2-b_2)$. Then \eqref{ineq:rho2-rho1} becomes
    \begin{equation*}
        b_2(a_3-e_2-e_3)-a_1(a_3-e_1-e_2-e_3)>b_2^2+e_2(a_1-b_2)-a_1(e_2-b_2)+b_2b_3,      
    \end{equation*}
    which is equivalent to 
    \begin{equation*}
        -b_2e_3-a_1(a_3-e_1-e_2-e_3)>0,    
    \end{equation*}
    which contradicts \eqref{wideness:4}.
    %Canceling equal terms, we obtain $-b_2e_3-a_1(a_3-e_1-e_2-e_3)>0$, which is a contradiction.    
%\end{proof}

\bigskip
\noindent{\bf Subcase (ii):\ } $a_1>b_2$ and $s_1=\min(\I_{u+y})$ and $s_2<\max(\I_{y+w})$.

%\begin{proof}
    %Suppose that Case (2) holds. Then $s_1=\min(\I_{u+y})$ and $s_2<\max(\I_{y+w})$. 
    If $s<\max(\I_{u+v+y+w})$, then we can decrease $u$ and increase $y$ (increasing $s_2$ and $s$). This would yield a contradiction, since we assume that $u$ cannot be decreased. Therefore, $s=\max(\I_{u+v+y+w})$. We will show that this leads to a contradiction.  
    
    By Claim~\ref{claim:s_1=min}, $u+y=s_1=a_1(e_3-b_3)$ and $x_{331}=0$. 
Since $s=s_1+s_2<\max(\I_{u+y})+\max(\I_{y+w})=\max(J_1)$ we must have $u+y+y+w=s=\max(\I_{u+v+y+w})=\max(J_2)=b_3^2+(a_2-b_3)e_3$. 
So, by \eqref{xijk_values}, $x_{333}=0$ and thus $\rho_3=x_{332}=b_2b_3-z_{332}\le b_2b_3$. By \eqref{eq:rho_3}, 
\begin{equation}\label{eq:a3-e3}
        a_3-e_3\le b_2.
    \end{equation}
    
    By \eqref{xijk_values}, $w=z_{322}=x_{222}-x_{322}$. So $x_{222}=x_{322}$, by Claim~\ref{claim:u=maxIu}. Also, since $x_{321}=\max(I_y)-y\geq0$ and $x_{323}=x_{332}=\rho_3>0$ by \eref{eq:rho_3} and \eref{wideness:4}, we have $x_{222}=x_{322}< \rho_2$, by  \eqref{e:rhoj}. Using \eqref{xijk_values}, \eqref{eq:rho_2}, and \eqref{eq:a3-e3}, we have
    \[b_2(b_2-e_2)+a_1e_2-x< b_2(a_3-e_2-e_3)<b_2^2-b_2e_2,\]
    which yields $x>a_1e_2$, contradicting \eqref{ineq:max_min}.    
%\end{proof}

\bigskip
\noindent{\bf Subcase (iii):\ } $a_1>b_2$ and $s_1>\min(\I_{u+y})$ and $s_2=\max(\I_{y+w})$.

    If $s>\min(\I_{u+v+y+w})$, then we can decrease $u$ (decreasing $s_1$ and $s$). This would yield a contradiction, since we assume that $u$ cannot be decreased. Therefore, $s=\min(\I_{u+v+y+w})$. We will show that this leads to a contradiction.
    
    Now $\min(\I_{u+v+y+w})=s=s_1+s_2 >\min(\I_{u+y})+\min(\I_{y+w})=\min(J_1)$. %\min(\I_{u+y}\oplus\I_{y+w})$. 
    So, $s=\min(J_2)=(a_2-b_3)e_3$. Thus, $z_{333}=0$ by \eqref{def:z333}, and $x_{333}=x_{233}=b_3^2$ by \eqref{xijk_values}.
    By Claim~\ref{claim:s_2=max}, $y+w=s_2=b_2e_3$. So $z_{332}=0$, by \eqref{def:z333}. Hence, $x_{323}=x_{332}=x_{232}=b_2b_3$, by \eqref{xijk_values}. From \eqref{e:rhoj}, \begin{equation}\label{eq:x331_eq}
        x_{331}=\rho_3-x_{332}-x_{333}=\rho_3-x_{232}-x_{233}.
    \end{equation}
     Since $x_{222}-x_{322}=z_{322}=w=0$, it follows that $x_{322}=x_{222}$. Also, $x_{221}=a_1b_2-a_1e_2+x=\max(I_y)$. But $x_{321}=x_{221}-z_{321}=\max(I_y)-y>0$ by Claim~\ref{claim:y<max}.  By \eqref{e:rhoj} we have
     \begin{equation}\label{eq:rho2_bigger}
         \rho_2=x_{321}+x_{322}+x_{323}>x_{322}+x_{323}=x_{222}+x_{232},
     \end{equation}
     and 
     \begin{equation}\label{eq:x321_eq}
         x_{321}=\rho_2-x_{222}-x_{232}.
     \end{equation}
     Note that $x_{211}=a_1(a_1-e_1)-z_{211}=a_1(a_1-e_1)-x=\max(I_u)$ so $x_{311}=x_{211}-z_{311}=\max(I_u)-u<0$. By \eqref{e:rhoj}, \eqref{eq:x331_eq} and \eqref{eq:x321_eq}, $$\rho_1=x_{311}+x_{312}+x_{313}<x_{312}+x_{313}=x_{321}+x_{331}=\rho_2-x_{222}-x_{232}+\rho_3-x_{232}-x_{233}.$$ This is equivalent to
     \begin{equation}\label{eq:all_rhos}
 2x_{232}+x_{233}+x_{222}<\rho_2+\rho_3-\rho_1.
     \end{equation}
     Since $a_1>b_2$, it follows from \eqref{eq:value_of_x} that $x=e_2(a_1-b_2)$ or $x=a_1(e_2-b_2)$ (depending on whether $a_1\ge e_2$). If $x=a_1(e_2-b_2)$, then $x_{212}=a_1b_2-a_1e_2+x=0$, contradicting that  $x_{212}\ge x_{312}=x_{321}>0$. Hence, 
     \begin{equation}\label{eq:x_equals:e2_times_a1-b2}
         x=e_2(a_1-b_2).
     \end{equation}
    The terms on the left-hand side of \eqref{eq:all_rhos} are $x_{232}=b_2b_3$, $x_{233}=b_3^2$, and $x_{222}=b_2^2-(b_2e_2-a_1e_2+x)=b_2^2$, by \eqref{eq:x_equals:e2_times_a1-b2}.
    Substituting these values and \eref{eq:rho_2} into \eqref{eq:rho2_bigger} yields $b_2(a_3-e_2-e_3)>b_2^2+b_2b_3$, which is equivalent to
    \begin{equation}\label{ineq:a1_morethan_e2+e3}
        a_1>e_2+e_3.
    \end{equation}
    Substituting the values for $x_{232}$, $x_{233}$, and $x_{222}$, as well as the expressions for $\rho_1$, $\rho_2$, and $\rho_3$ from \eqref{eq:rho_1}, \eqref{eq:rho_2} and \eqref{eq:rho_3} into \eqref{eq:all_rhos} yields
    \begin{equation*}
        (b_2+b_3)^2<(b_2+b_3)(a_3-e_3)-b_2e_2-a_1a_3+a_1(e_1+e_2+e_3).
    \end{equation*}
    By \eqref{wideness:5} and \eqref{ineq:a1_morethan_e2+e3}, 
    \begin{equation*}
        (b_2+b_3)^2<(b_2+b_3)(a_3-e_3)-b_2e_2-a_1a_3+a_3e_3+a_2e_2+a_1(a_1-e_2-e_3) = (b_2+b_3)^2,
    \end{equation*}
    a contradiction, which completes the proof of Subcase (iii) and hence also of Case (ii).
\end{proof}

\tref{thm:wide_implies_alloc} now follows.
\end{proof}

\begin{remark}
    Not every allocation of the first two row blocks of a wide Young diagram with three row lengths can be extended to an allocation of the third row block. 
    For example, let $Y$ be the wide Young diagram $(5,4,3,3)$ (where $a_1=3,a_2=4$, $a_3=5$,  $e_1=2$, and $e_2=e_3=1$). Let $Y'$ be the subdiagram of $Y$ consisting of the three lower rows (two lower row blocks). Consider the following allocation $(z_{ijk})$ of $Y'$: $z_{111} = 6$, $x=z_{211} = 3$, $z_{212} = 0$, $z_{221} = 0$, and $z_{222} = 1$. This allocation is demonstrated visually in Figure~\ref{fig:bad_allocation}. In this figure, the number of occurrences of symbol $k$ in the intersection of row block $i$ and column block $j$ is the value $z_{ijk}$.    
    
    \begin{figure}[h!]
    \centering
    \includegraphics[width=3.0cm]{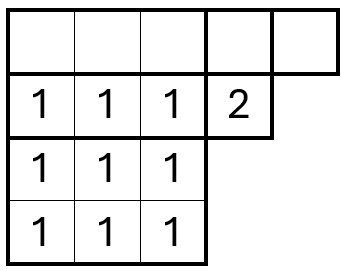}
    \caption{An allocation for $(4,3,3)$ that cannot be extended to an allocation for $(5,4,3,3)$.}
    \label{fig:bad_allocation}
    \end{figure}
    
    Lamentably, not all the constraints in \eqref{ineq:uvyw_nonneg}--\eqref{ineq:all_less} can hold. For allocating the upper row block, we have $z_{311}=u\le a_1(a_1-e_1)-x=3\cdot(3-2)-3=0$, by \eqref{ineq:u_less}. So, $u$ must be zero. 
    Also, $z_{322}=w\le a_1e_2-b_2(e_2-b_2)-x=0$. So, $w=0$. 
    From \eqref{ineq:ub_sums_of_2} we have $v+w\le 1$ and $y+w\le 1$. So, $v\le 1$ and $y\le 1$. 
    On the other hand, from \eqref{ineq:ub_sum_of_4} we must have $u+v+y+w\ge 3$. So, $v+y\ge 3$, a contradiction.
    Thus, the given allocation $(z_{ijk})$ cannot be extended to an allocation for $Y$. 
    However, if we choose values $(z'_{ijk})$ using \eqref{eq:value_of_x}, we have $z'_{111}=6$, $z'_{211}=2$, $z'_{212}=1$, $z'_{221}=1$, and $z'_{222}=0$, and this can be extended to an allocation of $Y$ by taking, 
    $z'_{311}=1$, $z'_{312}=1$, $z'_{313}=1$, $z'_{321}=1$, $z'_{331}=1$, with all other values $z'_{3jk}$ being $0$.
\end{remark}

\section{Discussion: towards the general case}\label{sec:general}

It is natural to try to generalise the construction in Section 4 to Young diagrams with any number of row lengths. 
Let $Y$ be a wide Young diagram with $p$ row blocks.
We assume the subdiagram $Y'$ consisting of the first $p-1$ blocks has an allocation $(z'_{ijk})_{1\le k,j\le i\le p-1}$ adhering to certain, yet unknown, assumptions $(X)$ generalizing \eqref{eq:value_of_x}, and with the additional symmetry property $z'_{ijk}=z'_{ikj}$ for all $1\le k,j\le i\le p-1$.
We aim to find an allocation  $(z_{ijk})_{1\le k,j\le i\le p}$ of $Y$ that is an extension of $(z'_{ijk})$ (i.e.\ $z_{ijk}=z'_{ijk}$ for $1\le k,j\le i\le p-1$) and that satisfies the symmetry property $z_{pjk}=z_{pkj}$ for all $1\le k,j\le p$. 

If we choose values $z_{pjk}$ for $1\le j\le k\le p-1$ satisfying ($X$), then the rest of the allocation will be determined by \eqref{e:sumk} and \eqref{e:sumj}:
\begin{equation}\label{def:z_pjp}
    z_{pjp}=b_je_p-\sum_{k=1}^{p-1}z_{pjk}\quad j=1,\ldots,p-1,
\end{equation}
\begin{equation}\label{def:z_ppk}
    z_{ppk}=b_ke_p-\sum_{j=1}^{p-1}z_{pjk}\quad k=1,\ldots,p-1,
\end{equation}
and
\begin{equation}\label{def:z_ppp}
    \begin{split}
        z_{ppp}&=b_pe_p-\sum_{k=1}^{p-1}z_{ppk}
        =b_pe_p-\sum_{k=1}^{p-1}\left(b_ke_p-\sum_{\smash{j}=1}^{p-1}z_{pjk}\right)\\
        &=b_pe_p-e_p\sum_{k=1}^{p-1}b_k+\sum_{k=1}^{p-1}\sum_{j=1}^{p-1}z_{pjk}
        %=b_pe_p-a_{p-1}e_p+\sum_{k=1}^{p-1}\sum_{j=1}^{p-1}z_{pjk}\\&
        =(b_p-a_{p-1})e_p+\sum_{k=1}^{p-1}\sum_{j=1}^{p-1}z_{pjk}.
    \end{split}
\end{equation}

If we impose the constraints on the sums in \eqref{def:z_pjp}, \eqref{def:z_ppk}, and  \eqref{def:z_ppp} arising from the non-negativity constraints of each variable and from \eqref{e:sumi}, then we obtain upper and lower bounds for each of the variables $z_{pjk}$ and each of the sums $\sum_{k=1}^{p-1}z_{pjk}$, $\sum_{j=1}^{p-1}z_{pjk}$, and $\sum_{j=1}^{p-1}\sum_{k=1}^{p-1}z_{pjk}$.
%the $p(p-1)/2$ individual values $z_{pjk}$, for $1\le j\le k\le p-1$,  for the sums $\sum_{k=1}^{p-1}z_{pjk}$, where $1\le j\le p-1$, the sums $\sum_{j=1}^{p-1}z_{pjk}$, where $1\le k\le p-1$ and the sum $\sum_{j=1}^{p-1}\sum_{k=1}^{p-1}z_{pjk}$. 
We can show that wideness implies that each lower bound is indeed smaller than or equal to the corresponding upper bound. However, so far we have not been able to show that the system of inequalities is consistent, that is, that we can actually choose non-negative values $z_{pjk}$ such that all our constraints are satisfied.
The difficulty of showing this for the case $p=3$ in Section~\ref{sec:three} suggests that new ideas should be considered.

Another possible approach is considered in \cite{abgk}, where the possible entries of a Young diagram are viewed as edges in a 3-partite 3-hypergraph $H(Y)$. Consider the dual notions of the second matching number $\nu^{(2)}(H(Y))$ and the second covering number $\tau^{(2)}(H(Y))$ (for definitions of $H(Y), \nu^{(2)}$, and $\tau^{(2)}$ see subsection 1.1 in \cite{abgk}). From the definition of $\nu^{(2)}(H(Y))$, we have that a Young diagram is Latin if and only if $\nu^{(2)}(H(Y))=|Y|$. The latter implies that Conjecture~\ref{conj:wpc} is equivalent to the claim that if a Young diagram $Y$ is wide, then $\nu^{(2)}(H(Y))=|Y|$. In \cite{abgk}, the weaker statement that if $Y$ is wide then $\tau^{(2)}(H(Y))=|Y|$ is proved (weaker, since $\tau^{(2)}(H)\ge\nu^{(2)}(H)$ for any hypergraph $H$). So, in view of Theorem~\ref{thm:alloc_implies_latin}, the following is equivalent to Conjectures~\ref{conj:wpc} and \ref{conj:wide_implies_outline}:

\begin{conjecture}
    For a Young diagram $Y,$ if $\tau^{(2)}(H(Y))=|Y|$, then $Y$ has an allocation.
\end{conjecture}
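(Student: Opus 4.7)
The strategy I would pursue is to try to derive an allocation directly from a minimum second cover of $H(Y)$. The assumption $\tau^{(2)}(H(Y))=|Y|$ supplies a highly constrained hypergraph object, and the hope is that its structure carries enough information to recover a collection of integers $(z_{ijk})$ satisfying \eqref{e:sumk}--\eqref{e:sumi}.

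First, I would attempt to extract from a minimum second cover a \emph{fractional} allocation, namely a nonnegative real solution $(\tilde z_{ijk})$ to \eqref{e:sumk}--\eqref{e:sumi}. Concretely, one aggregates by blocks: for each row block $i$, column block $j$, and symbol block $k$, set $\tilde z_{ijk}$ equal to the total mass the cover places on triples $(r,c,s)$ with $r$ in row block $i$, $c$ in column block $j$, and $s$ in symbol block $k$. The marginal identities \eqref{e:sumk} and \eqref{e:sumj} should then follow because every cell of $Y$ is covered in aggregate the correct number of times, while the capacity constraint \eqref{e:sumi} should be enforced by the fact that only $b_jb_k$ distinct column/symbol pairs are available from blocks $j$ and $k$. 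This is precisely the step that uses the equality $\tau^{(2)}(H(Y))=|Y|$ as opposed to a weak inequality.

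Next, one must pass from a fractional to an integer allocation. The polytope cut out by \eqref{e:sumk}--\eqref{e:sumi} has the flavour of a layered transportation polytope: for each $i$ the slice $\{z_{ijk}\}_{j,k\le i}$ is constrained by two-dimensional marginals, while \eqref{e:sumi} couples the slices. A plausible approach is to show this polytope is integral, for instance by exhibiting a totally balanced or network-matrix structure, or to apply an iterative rounding procedure that preserves the row/column marginals exactly while respecting \eqref{e:sumi} via a min-cost flow reduction. An attractive alternative, which bypasses the integrality step, is to extend Hilton's theorem to outline rectangles with rational multiplicities and then clear denominators at the Latin square stage; Theorem~\ref{thm:alloc_implies_latin} would need to be modified accordingly, but the modification seems within reach.

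The main obstacle is the first step. The proof in \cite{abgk} that wideness implies $\tau^{(2)}(H(Y))=|Y|$ is a counting argument and does not explicitly produce a second cover of the symmetric aggregated form needed to define a legal fractional allocation. In particular, it is not evident that a minimum second cover can be chosen to be invariant under permutations within each row, column, and symbol block, and without such invariance the proposed aggregation may distribute mass unevenly and violate \eqref{e:sumi}. A possible way around this is to argue by LP duality: suppose the fractional allocation polytope is empty, extract a Farkas separator, and translate it into a combinatorial certificate forcing $\tau^{(2)}(H(Y))<|Y|$. Making this translation rigorous is the crux, and I expect it will require genuinely new hypergraph techniques beyond those in \cite{abgk}, since the Wide Partition Conjecture itself seems to reside in exactly this gap.
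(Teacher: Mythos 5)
The statement you are addressing is not a theorem of the paper: it is stated there as an open conjecture, shown (via Theorem~\ref{thm:alloc_implies_latin} and the results of \cite{abgk}) to be \emph{equivalent} to Conjectures~\ref{conj:wpc} and~\ref{conj:wide_implies_outline}, i.e.\ to the free-matroid case of the Wide Partition Conjecture itself. The paper offers no proof, and your proposal does not supply one either; it is a research programme whose central step is exactly the open problem. The genuine gap is your first step: you propose to aggregate a minimum second cover by row/column/symbol blocks and claim the marginal identities \eqref{e:sumk}--\eqref{e:sumj} and the capacity bound \eqref{e:sumi} follow. That aggregation argument is the one that works for a second \emph{matching} of size $|Y|$ --- indeed, a second matching of value $|Y|$ is essentially a Latin filling, and block-aggregating a Latin filling is precisely how the easy direction of Theorem~\ref{thm:alloc_implies_latin} produces an allocation. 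A second \emph{cover} is a dual object: its mass is not an assignment of symbols to cells, it need not place exactly one unit on each cell in any meaningful sense, and nothing prevents it from loading many cover-units onto the same column/symbol pair, so neither the marginals $e_ib_j$, $e_ib_k$ nor the bound $b_jb_k$ in \eqref{e:sumi} can be read off from it. The hypothesis $\tau^{(2)}(H(Y))=|Y|$ by itself certifies nothing about matchings; bridging the gap between $\tau^{(2)}$ and $\nu^{(2)}$ (equivalently, producing the filling or the allocation) is the entire content of the conjecture, as you yourself concede in your final paragraph.

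Your secondary suggestions do not repair this. The LP-duality/Farkas route would require translating an infeasibility certificate for the allocation polytope into a witness that $\tau^{(2)}(H(Y))<|Y|$, which is again the missing combinatorial core, not a routine translation. The integrality discussion is moot until a fractional allocation exists, and in any case the paper's mechanism for converting allocations into Latin fillings (Hilton's Theorem~\ref{thm:Hilton}) already handles the integer object directly, so no rational extension of outline rectangles is needed. In short: you have correctly located where the difficulty lives, but the proposal contains no proof of the statement, and the statement remains open, being equivalent to the conjectures the paper is about.
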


%==========================================

% -----------------------------------------------
\bibliographystyle{siam}
\bibliography{refs}

\end{document}